\documentclass{article}
\bibliographystyle{ba}

\usepackage{epsfig}

\usepackage{amssymb,amsmath,amsthm,bbm,color,array,multicol}
\usepackage{npbayes}

\begin{document}

\author{
Tamara Broderick, 
Jim Pitman, 
Michael I. Jordan 
}
\title{Feature allocations, probability functions, and paintboxes}
\maketitle

\begin{abstract}
The problem of inferring a clustering of a data set has been the subject of
much research in Bayesian analysis, and there currently exists a solid mathematical 
foundation for Bayesian approaches to clustering.  In particular, the class of 
probability distributions over partitions of a data set has been characterized 
in a number of ways, including via exchangeable partition probability functions 
(EPPFs) and the Kingman paintbox.  Here, we develop a generalization of 
the clustering problem, called feature allocation, where we allow each data point 
to belong to an arbitrary, non-negative integer number of groups, now called 
features or topics.  We define and study an ``exchangeable feature probability 
function'' (EFPF)---analogous to the EPPF in the clustering setting---for certain 
types of feature models.  Moreover, we introduce a ``feature paintbox'' 
characterization---analogous to the Kingman paintbox for clustering---of 
the class of exchangeable feature models.  We provide a further characterization
of the subclass of feature allocations that have EFPF representations.

\end{abstract}

\section{Introduction} \label{sec:introduction}

Exchangeability has played a key role in the development of 
Bayesian analysis in general and Bayesian nonparametric analysis 
in particular.  Exchangeability can be viewed as asserting that 
the indices used to label the data points are irrelevant for 
inference, and as such is often a natural modeling assumption.  
Under such an assumption, one is licensed by de Finetti's 
theorem~\citep{de_finetti:1931:funzione} to propose the existence of an 
underlying parameter that renders the data conditionally 
independent and identically distributed (iid) and to place 
a prior distribution on that parameter.  Moreover, the 
theory of infinitely exchangeable sequences has advantages 
of simplicity over the theory of finite exchangeability, 
encouraging modelers to take a nonparametric stance in which the 
underlying ``parameter'' is infinite dimensional.  Finally, the 
development of algorithms for posterior inference is often greatly 
simplified by the assumption of exchangeability, most notably in 
the case of Bayesian nonparametrics, where models based on the 
Dirichlet process and other combinatorial priors became useful 
tools in practice only when it was realized how to exploit exchangeability 
to develop inference procedures~\citep{escobar:1994:estimating}.

The connection of exchangeability to Bayesian nonparametric modeling
is well established in the case of models for clustering.  The
goal of a clustering procedure is to infer a partition of the
data points.  In the Bayesian setting, one works with random 
partitions, and, under an exchangeability assumption, the distribution 
on partitions should be invariant to a relabeling of the data 
points.  The notion of an exchangeable random partition has been 
formalized by Kingman, Aldous, and
others~\citep{kingman:1978:representation,aldous:1985:exchangeability},
and has led to the definition of an \emph{exchangeable partition 
probability function} (EPPF)~\citep{pitman:1995:exchangeable}.
The EPPF is a mathematical function of the cardinalities of the 
groups in a partition.  Exchangeability of the random partition is 
captured by the requirement that the EPPF be a symmetric function 
of these cardinalities.  Furthermore, the exchangeability of a partition 
can be related to the exchangeability of a sequence of random variables 
representing the assignments of data points to clusters, for which a 
de Finetti mixing measure necessarily exists.  This de Finetti measure is
known as the \emph{Kingman paintbox}~\citep{kingman:1978:representation}.
The relationships among this circle of ideas are well understood:
it is known that there is an equivalence among the class of exchangeable 
random partitions, the class of random partitions that possess an 
EPPF, and the class of random partitions generated by a Kingman 
paintbox; see \citet{pitman:2006:combinatorial} for an overview
of these relations.
A specific example of these relationships is given by 
the Chinese restaurant process and the Dirichlet process, but several 
other examples are known and have proven useful in Bayesian nonparametrics.

Our focus in the current paper is on an alternative to clustering models 
that we refer to as \emph{feature allocation models}.  While in a clustering 
model each data point is assigned to one and only one class, in a feature 
allocation model each data point can belong to multiple groups.  It is often 
natural to view the groups as corresponding to traits or features, such that 
the notion that a data point belongs to multiple groups corresponds to the point 
exhibiting multiple traits or features.  A Bayesian feature allocation model 
treats the feature assignments for a given data point as random and subject 
to posterior inference.  A nonparametric Bayesian feature allocation model takes
the number of features to also be random and subject to inference.

Research on nonparametric Bayesian feature allocation has been based around 
a single prior distribution, the Indian buffet process of~\citet{griffiths:2006:infinite},
which is known to have the beta process as its underlying de Finetti
measure~\citep{thibaux:2007:hierarchical}.  There does not yet exist
a general definition of exchangeability for feature allocation models,
nor counterparts of the EPPF or the Kingman paintbox.

In this paper we supply these missing constructions.  We provide a
rigorous treatment of exchangeable feature allocations (in \mysec{feature}
and \mysec{label}).  In \mysec{efpf} we define a notion of \emph{exchangeable 
feature probability function} (EFPF) that is the analogue for feature 
allocations of the EPPF for clustering.  We then proceed to define
a \emph{feature paintbox} in \mysec{paintbox}.  Finally, in \mysec{freq} 
we discuss a class of models that we refer to as \emph{feature frequency models}
for which the construction of the feature paintbox is particularly
straightforward, and we discuss the important role that feature frequency models 
play in the general theory of feature allocations.

The Venn diagram shown in \fig{venn_summary} is a useful guide for
understanding our results, and the reader may wish to consult this
diagram in working through the paper.
\begin{figure}
	\centerline{
		\scalebox{0.45}{
			\input{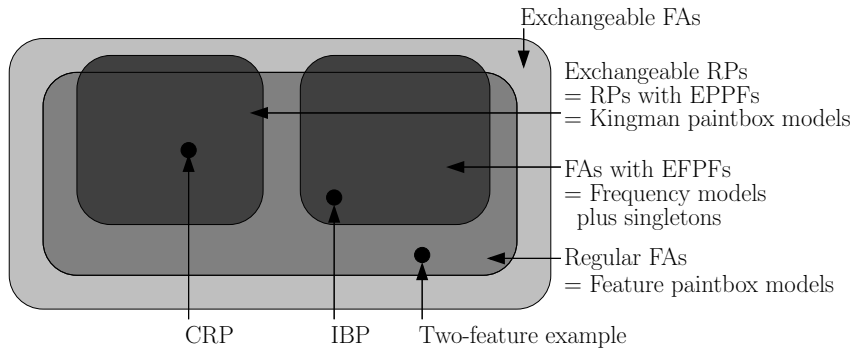}
		}
	}
\caption{\label{fig:venn_summary} A summary of the relations described in
this paper. Rounded rectangles represent classes with the following abbreviations:
RP for random partition, FA for random feature allocation, EPPF for
exchangeable partition probability function, EFPF for exchangeable feature probability
function. The large black dots represent particular models with the
following abbreviations: CRP for Chinese restaurant process, IBP for
Indian buffet process. The two-feature example refers to \ex{two_feat} with the
choice $p_{11} p_{00} \ne p_{10} p_{01}$.
}
\end{figure}
As shown in the diagram, random partitions (RPs) are a special case of 
random feature allocations (FAs), and previous work on random partitions
can be placed within our framework.  Thus, in the diagram, we have depicted
the equivalence already noted of exchangeable RPs, RPs that possess an EPPF, 
and Kingman paintboxes.  We also see that random feature allocations have 
a somewhat richer structure: the class of FAs with EFPFs is not the same 
as those having an underlying feature paintbox.  But the class of EFPFs 
is characterized in a different way; we will see that
the class of feature allocations with EFPFs
is equivalent to the class of 
FAs obtained from feature frequency models together with singletons of a certain distribution. Indeed, we will find that
the class of clusterings with EPPFs is, in this way, analogous to the class of feature allocations with
EFPFs when both are considered as subclasses of the general class of feature allocations.
The diagram also shows several examples 
that we use to illustrate and develop our theory.

\section{Feature allocations} \label{sec:feature}

We consider data sets with $N$ points and let the points be indexed
by the integers $[N] := \{1,2,\ldots,N\}$.  We also explicitly allow
$N = \infty$, in which case the index set is $\mathbb{N} = \{1,2,3,\ldots\}$.
For our discussion of feature allocations and partitioning it is sufficient
to focus on the indices rather than the data points; thus, we will be
discussing models for collections of subsets of $[N]$ and $\mathbb{N}$.

Our introduction to feature allocations follows \citet{broderick:2012:clusters}.
We define a \emph{feature allocation} $\detfa_{N}$ of $[N]$ to be a multiset
of non-empty subsets of $[N]$ called \emph{features},
such that no index $n$ belongs to infinitely many features.
We write $\detfa_{N} = \{\blockfa_{1},\ldots,\blockfa_{K}\}$, where $K$ is
the number of features.  An example feature allocation of 
$[6]$ is $\detfa_{6} = \{\{2,3\},\{2,4,6\},\{3\},\{3\},\{3\}\}$. 
Similarly, a feature allocation $\detfa_{\infty}$ of $\mathbb{N}$
is a multiset of non-empty subsets of $\mathbb{N}$ such that no 
index $n$ belongs to infinitely many features.
The total number of features in this case may be infinite, in which case
we write
$\detfa_{\infty} = \{\blockfa_{1},\blockfa_{2},\ldots\}$. An example 
feature allocation of $\mathbb{N}$ is 
$\detfa_{\infty} = \{\{n: n \textrm{ is prime}\}, \{n: n \textrm{ is not divisible by two}\}\}$.
Finally, we may have $K=0$, and $\detfa_{\infty} = \emptyset$ is a valid feature allocation.

A \emph{partition} is a special case of a feature allocation for which
the features are restricted to be mutually exclusive and exhaustive.
The features of a partition are often referred to as \emph{blocks} or \emph{clusters}.
We note that a partition is always a feature allocation, but the converse statement
does not hold in general; neither of the examples given
above ($\detfa_{6}$ and $\detfa_{\infty}$) are partitions.

We now turn to the problem of defining exchangeable feature allocations, extending 
previous work on exchangeable random partitions~\citep{aldous:1985:exchangeability}.
Let $\randfasp_{N}$ be the space of all feature allocations of $[N]$.
A \emph{random feature allocation} $\randfa_{N}$ of $[N]$ is a random 
element of $\randfasp_{N}$.  Let $\perm: \mathbb{N} \rightarrow \mathbb{N}$ be a
finite permutation. That is, for some finite value $N_{\perm}$, we have 
$\perm(n) = n$ for all $n > N_{\sigma}$. Further, for any feature 
$\blockfa \subset \mathbb{N}$, denote the permutation applied to the feature as follows:
$\perm(\blockfa) := \{\perm(n): n \in \blockfa\}$. For any feature allocation $F_{N}$,
denote the permutation applied to the feature allocation as follows:
$\perm(\randfa_{N}) := \{\perm(\blockfa): \blockfa \in \randfa_{N}\}$.
Finally, let $\randfa_{N}$ be a random feature allocation of $[N]$. 
Then we say that a random feature allocation $\randfa_{N}$ is \emph{exchangeable} 
if $\randfa_{N} \eqd \sigma(\randfa_{N})$ for every permutation of $[N]$.

In addition to exchangeability, we also require our distributions on feature 
allocations to exhibit a notion of coherence across different ranges 
of the index.  Intuitively, we often imagine the indices as denoting
time, and it is natural to suppose that the randomness at time $n$ 
is coherent with the randomness at time $n+1$. More formally, we say 
that a feature allocation $\detfa_{M}$ of $[M]$ is a \emph{restriction} 
of a feature allocation $\detfa_{N}$ of $[N]$ for $M < N$ if
$$
	\detfa_{M} = \{\blockfa \cap [M]: \blockfa \in \detfa_{N}, \blockfa \cap [M] \neq \emptyset\}.
$$
Let $\restfa_{N}(\detfa_{M})$ be the set of all feature allocations of $[N]$
whose restriction to $[M]$ is $\detfa_{M}$.

Let $\mbp$ denote a probability measure on some probability space supporting
$(\randfa_{n})$.
We say that the sequence
of random feature allocations $(\randfa_{n})$ is \emph{consistent in distribution}
if
for all $M$ and
$N$ such that $M < N$, we have
$$
	\mbp(\randfa_{M} = \detfa_{M}) = \sum_{\detfa_{N} \in \restfa_{N}(\detfa_{M})} \mbp(\randfa_{N} = \detfa_{N}).
$$
We say that the sequence $(\randfa_{n})$ is \emph{strongly consistent} if
for all $M$ and
$N$ such that $M < N$, we have
$$
	\randfa_{N} \stackrel{a.s.}{\in} \restfa_{N}(\randfa_{M}).
$$
Given any $(\randfa_{n})$ that is consistent in distribution, the 
Kolmogorov extension theorem implies that we can construct a sequence of random
feature allocations that is strongly consistent and has the same finite
dimensional distributions. So henceforth we simply use the term 
``consistency'' to refer to strong consistency.

With this consistency condition, we can define a random feature allocation
$\randfa_{\infty}$
of $\mathbb{N}$ as a consistent sequence of finite feature allocations.
Thus $\randfa_{\infty}$ may be thought of as a random
element of the space of such sequences: $\randfa_{\infty} = (\randfa_{n})_{n=1}^{\infty}$.
We say that $\randfa_{N}$ is a restriction of $\randfa_{\infty}$ to $[N]$ when it
is the $N$th element in this sequence.
We let $\randfasp_{\infty}$ denote the space of consistent feature allocation sequences, of which
each random feature allocation is a random element. The sigma field
associated with this space is generated by the finite-dimensional sigma fields
of the restricted random feature allocations $\randfa_{n}$.

We say that $\randfa_{\infty}$ is exchangeable if
$\randfa_{\infty} \eqd \sigma(\randfa_{\infty})$ for every finite permutation
$\perm$. That is, for every permutation $\perm$ that changes no indices above $N$ for some
$N < \infty$, we
require $\randfa_{N} \eqd \sigma(\randfa_{N})$, where $\randfa_{N}$ is the
restriction of $\randfa_{\infty}$ to $[N]$.

\section{Labeling features} \label{sec:label}

Now that we have defined consistent, exchangeable random
feature allocations, we want to characterize the class
of all distributions on these allocations. We begin by considering some alternative
representations of the feature allocation that are not merely useful, but indeed
key to some of our later results.

A number of authors have made use of matrices as a way of representing 
feature allocations~\citep{griffiths:2006:infinite,thibaux:2007:hierarchical,doshi:2009:variational}. This representation, while
a boon for intuition in some regards, requires care because a matrix
presupposes an order
on the features, which is not a part of the feature allocation a priori. 
We cover this distinction in some
detail next.

We start by defining an \emph{a priori labeled feature allocation}.
Let $\hat{F}_{N,1}$ be the
collection of indices in $[N]$ with feature 1, let $\hat{F}_{N,2}$
be the collection of indices in $[N]$ with feature 2, etc.
Here, we think of a priori labels as being the ordered, positive natural numbers.
This specification
is different from (a priori unlabeled) feature allocations as defined above since there
is nothing to distinguish the features in a feature allocation other than, potentially,
the members of a feature.
Consider the following analogy: an a priori labeled feature allocation is to 
a feature allocation as a classification is to a clustering. Indeed, when each index
$n$ belongs to exactly one feature in an a priori feature allocation, feature 1 is just
class 1, feature 2 is class 2, and so on.

Another way to think of an a priori labeled feature allocation of $[N]$ is as a matrix of
$N$ rows filled with zeros and ones. Each column is associated with a feature.
The $(n,k)$ entry in the matrix is one if index $n$ is in feature $k$ and zero otherwise.
However, just as---contrary to the classification case---we do not know the ordering of clusters
in a clustering a priori, we do not a priori know the ordering of features in a feature allocation.
To make use of a matrix representation for a feature allocation, we will need to introduce or find such an order.

The reasoning above suggests that introducing an order for features in a feature allocations would be
useful. The next example illustrates 
that the probability $\mbp(\randfa_{N} = \detfa_{N})$ in some sense
undercounts features when they contain exactly 
the same indices: e.g., $\blockfa_{j} = \blockfa_{k}$ for some $j \ne k$. 
This fact will suggest to us that it is not merely useful, but indeed a key point 
of our theoretical development, to introduce an ordering on features.

\begin{example}[A Bernoulli, two-feature allocation] \label{ex:two_bern}

Given $q_{A}, q_{B} \in (0,1)$,
draw $\indpart_{n,A} \iid \bern(q_{A})$ and $\indpart_{n,B} \iid \bern(q_{B})$, independently,
and construct the random feature allocation by collecting those
indices with successful draws:
$$
	\randfa_{N} := \{\{n: n \le N, \indpart_{n,A} = 1\}, \{n: n \le N,
\indpart_{n,B} = 1\}\}.
$$
One caveat here is that if either of the two sets in the multiset $\randfa_{N}$
is empty, we do not include it in the allocation. Note that calling the
features $A$ and $B$ was merely for the purposes of construction, and in defining
$\randfa_{N}$, we have lost all feature labels. So $\randfa_{N}$ is a feature allocation,
not an a priori labeled feature allocation.

Then the probability of the feature allocation $\randfa_{5} = \detfa_{5} := \{\{2,3\},\{2,3\}\}$ is
$$
	q_{A}^{2} (1-q_{A})^{3} q_{B}^{2} (1-q_{B})^{3},
$$
but the probability of the feature allocation $\randfa_{5} = \detfa'_{5} := \{\{2,3\},\{2,5\}\}$ is
$$
	2 q_{A}^{2} (1-q_{A})^{3} q_{B}^{2} (1-q_{B})^{3}.
$$
The difference is that in the latter case the features can be distinguished, and so
we must account for the two possible pairings of features to frequencies $\{q_{A},q_{B}\}$.

Now, instead, let $\labrandfa_{N}$ be $\randfa_{N}$ with the features ordered 
uniformly at random amongst all possible feature orderings.
There is just a single possible ordering of $\detfa_{5}$, so
the probability of $\labrandfa_{5} = \labdetfa_{5} := (\{2,3\}, \{2,3\})$ is again
$$
	q_{A}^{2} (1-q_{A})^{3} q_{B}^{2} (1-q_{B})^{3}.
$$
However, there are two orderings of $\detfa'_{5}$, each of which
is equally likely. The
probability of $\labrandfa_{N} = \labdetfa'_{5} := (\{2,5\}, \{2,3\})$ is
$$
	q_{A}^{2} (1-q_{A})^{3} q_{B}^{2} (1-q_{B})^{3}.
$$
The same holds for the other ordering.
\end{example}

This example suggests that there are combinatorial factors that must
be taken into account when working with the distribution of $\randfa_{N}$
directly.  The example also suggests that we can avoid the need to specify such factors by instead 
working with a suitable randomized ordering of the random feature allocation 
$\randfa_{N}$.  We achieve this ordering in two steps.

The first step involves ordering the features via a procedure that 
we refer to as \emph{order-of-appearance labeling}. The basic idea
is that we consider data indices $n = 1, 2, 3$, and so on in order.
Each time a new data point arrives, we examine the features associated
with that data point. Each time we see a new feature, we label it with 
the lowest available feature label from $k = 1, 2, \ldots$.

In practice, the order-of-appearance scheme requires some auxiliary
randomness since each index $n$ may belong to zero, one, or many different 
features (though the number must be finite). When multiple features
first appear for index $n$, we order them uniformly at random.
That simple idea is explained in full detail as follows.
Recursively suppose that there are $K$ features among the
indices $[N-1]$. Trivially there are zero features when no indices
have been seen yet.  Moreover, we suppose
that we have features with labels $1$ through $K$
if $K \ge 1$, and if $K = 0$, we have no features.
If features remain without labels,
there exists some minimum
index $n$ in the data indices such that $n \notin \bigcup_{k=1}^{K} \blockfa_{k}$, where the
union is $\emptyset$ if $K = 0$.
It is possible that no features contain $n$.
So we further note that there exists some minimum index $m$ such
that $m \notin \bigcup_{j=1}^{K} \blockfa_{j}$ but $m$ is contained in some feature
of the allocation. 
By construction, we must have $m \ge N$.
Let $K_{m}$ be the number of features
containing $m$; $K_{m}$ is finite by definition of a feature allocation.
Let $(U_{k})$ denote a sequence of iid uniform random variables,
independent of the random feature allocation.
Assign $U_{K+1}, \ldots, U_{K + K_{m}}$ to these new features and
determine their order of appearance by the order of these random variables.
While features remain to be labeled,
continue the recursion with $N$ now equal to $m$ and $K$ now equal to $K + K_{m}$.

\begin{example}[Feature labeling schemes]
\label{ex:oa_6}
Consider the feature allocation
\begin{equation}
	\label{eq:feat_6_oa}
	\detfa_{6} = \{\{2,5,4\},\{3,4\},\{6,4\},\{3\},\{3\}\}.
\end{equation}
And consider the random variables
$$
	U_{1}, U_{2}, U_{3}, U_{4}, U_{5} \iid \unif[0,1].
$$
We see from $\detfa_{6}$ that index $1$ has no features. 
Index $2$ has exactly one feature, so we assign this feature, $\{2,5,4\}$,
to have order-of-appearance label 1. While $U_{1}$ is associated with
this feature, we do not need to break any ties at this point, so it has no effect.

Index $3$ is associated with three features. We associate each feature
with exactly one of $U_{2}, U_{3},$ and $U_{4}$ (the next three available $U_{k}$).
For instance, pair $\{3,4\}$ with $U_{2}$, $\{3\}$ with $U_{3}$, and the other
$\{3\}$ with $U_{4}$. Suppose it happens that $U_{3} < U_{2} < U_{4}$.
Then the feature $\{3\}$ paired with $U_{3}$ receives label 2 (the next available
order-of-appearance label).
The feature $\{3,4\}$ receives label 3. And the feature $\{3\}$
paired with $U_{4}$ receives label 4.

Index $4$ has three features, but $\{2,5,4\}$ and $\{3,4\}$ are already labeled.
So the only remaining feature, $\{6,4\}$, receives the next available 
order-of-appearance label: 5. $U_{5}$ is associated with this feature, but
since we do not need to break ties here, it has no effect.
Indices $5$ and $6$ belong to already-labeled features.

So the features can be listed with order-of-appearance indices as
\begin{equation}
	\label{eq:ex_A_oa}
	A_{1} = \{2,5,4\}, A_{2} = \{3\}, A_{3} = \{3,4\}, A_{4} = \{3\}, A_{5} = \{6,4\}.
\end{equation} 
Let $\indfao_{n}$ indicate the set of order-of-appearance feature labels
for the features to which index $n$ belongs;
i.e., if the features are labeled according to order of appearance
as in \eq{ex_A_oa}, then $\indfao_{n} = \{k: n \in A_{k}\}$.
By definition of a feature allocation, $\indfao_{n}$ must
have finite cardinality.
The order-of-appearance labeling gives
$
\indfao_{1} = \emptyset,
\indfao_{2} = \{1\},
\indfao_{3} = \{2,3,4\},
\indfao_{4} = \{1,3,5\},
\indfao_{5} = \{1\},
\indfao_{6} = \{5\}.
$

\begin{figure}
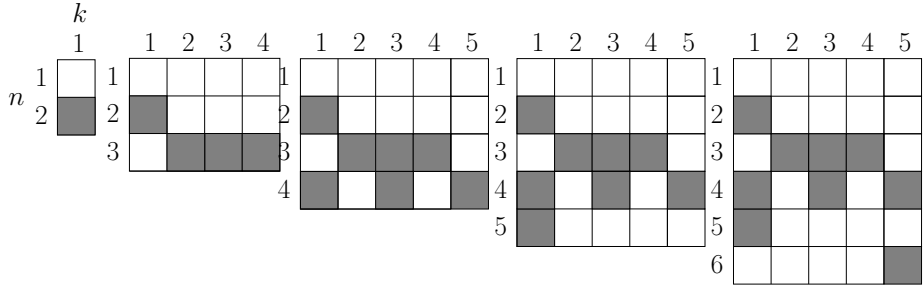

		\begin{minipage}[t]{0.10\textwidth}
			\vspace{0pt}
			\scalebox{0.5}{
				\input{mx_feat2.pstex_t}
			}
		\end{minipage}
		\begin{minipage}[t]{0.18\textwidth}
			\vspace{11pt}
			\scalebox{0.5}{
				\input{mx_feat3.pstex_t}
			}
		\end{minipage}
		\begin{minipage}[t]{0.23\textwidth}
			\vspace{11pt}
			\scalebox{0.5}{
				\input{mx_feat4.pstex_t}
			}
		\end{minipage}
		\begin{minipage}[t]{0.23\textwidth}
			\vspace{11pt}
			\scalebox{0.5}{
				\input{mx_feat5.pstex_t}
			}
		\end{minipage}
		\begin{minipage}[t]{0.23\textwidth}
			\vspace{11pt}
			\scalebox{0.5}{
				\input{mx_feat6.pstex_t}
			}
		\end{minipage}
\caption{\label{fig:mx_feat} Order-of-appearance binary matrix representations of the
sequence of feature allocations on $[2], [3], [4], [5],$ and $[6]$ found by restricting $\detfa_{6}$ in \ex{oa_6}. Rows correspond to
indices $n$, and columns correspond to order-of-appearance feature
labels $k$. A gray square indicates a 1 entry, and a white square indicates
a 0 entry. $\indfao_{n}$, the set of order-of-appearance
feature assignments of index $n$,
is easily read off from the matrix as the set of columns with entry
in row $n$ equal to 1.
}
\end{figure}

Order-of-appearance labeling is well-suited for matrix
representations of feature allocations. The rows of the matrix correspond
to indices $n$ and the columns correspond to features with order-of-appearance
labels $k$.
The matrix representation of the order-of-appearance labeling and 
resulting feature assignments $(\indfao_{n})$ for $n \in [6]$ is
depicted in \fig{mx_feat}.
\end{example}

Note that when the feature
allocation is a partition, there is exactly one feature containing any $m$,
so this scheme reduces to the order-of-appearance scheme for
cluster labeling.

Consider an exchangeable feature allocation $\randfa_{\infty}$.
Give order-of-appearance labels to the features of this allocation,
and let $\indfao_{n}$ be the set of feature labels for features containing $n$.
So $\indfao_{n}$ is a random finite subset of $\mathbb{N}$. It can be
thought of as a simple point process on $\mathbb{N}$; a discussion
of measurability of such processes may be found in
\citet[][p.\ 178]{kallenberg:2002:foundations}. Our process is even simpler
than a simple point process as it is globally finite rather than merely locally
finite.

Note that $(\indfao_{n})_{n=1}^{\infty}$ is not
necessarily exchangeable. For instance,
consider again \ex{two_bern}.
If $\indfao_{1}$ is non-empty, $1 \in \indfao_{1}$ with probability one.
If $\indfao_{2}$ is non-empty,
with positive probability it may not contain $1$.
To restore exchangeability we extend an idea due to \citet{aldous:1985:exchangeability} 
in the setting of random partitions, associating to each feature a draw 
from a uniform random variable on $[0,1]$.  Drawing these random variables
independently we maintain consistency across different values of $N$.
We refer to these random variables as \emph{uniform random feature labels}.

Note that the use of a uniform distribution is for convenience; we simply 
require that features receive distinct labels with probability one, so 
any other continuous distribution would suffice.  We also note that in 
a full-fledged model based on random feature allocations these labels
often play the role of parameters and are used in defining the likelihood.
For further discussion of such constructions, see \citet{broderick:2012:clusters}.

Thus, let $(\randlabfa_{k})$ be a sequence of iid uniform random variables,
independent of both $(U_{k})$ and $\randfa_{\infty}$. Construct a new feature
labeling by taking the feature labeled $k$ in the order-of-appearance labeling
and now label it $\randlabfa_{k}$. In this case, let $\indfar_{n}$ denote the set of feature
labels for features to which $n$ belongs. Call this a \emph{uniform random
labeling}. $\indfar_{n}$ can be
thought of as a (globally finite) simple point process on $[0,1]$.
Again, we refer the reader to \citet[][p.\ 178]{kallenberg:2002:foundations}
for a discussion of measurability.


\begin{example}[Feature labeling schemes (continued)] \label{ex:feat_6_unif_rand}
Again consider the feature allocation
$$
	\detfa_{6} = \{\{2,5,4\},\{3,4\},\{6,4\},\{3\},\{3\}\}.
$$
Now consider the random variables
$$
	U_{1}, U_{2}, U_{3}, U_{4}, U_{5}, \randlabfa_{1}, \randlabfa_{2}, \randlabfa_{3}, \randlabfa_{4}, \randlabfa_{5}
 \iid \unif[0,1].
$$
Recall from \ex{oa_6} that $U_{1}, \ldots, U_{5}$ gave us the order-of-appearance
labeling of the features. This labeling allowed us to index the features
as in \eq{ex_A_oa}, copied here:
\begin{equation}
	A_{1} = \{2,5,4\}, A_{2} = \{3\}, A_{3} = \{3,4\}, A_{4} = \{3\}, A_{5} = \{6,4\}.
\end{equation}

\begin{figure}
	\centerline{
		\scalebox{0.5}{
			\input{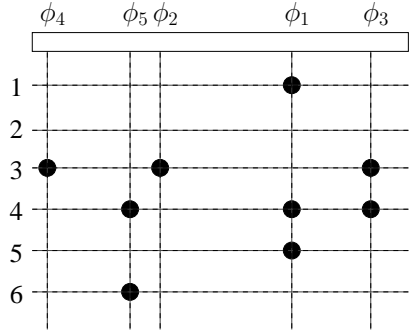}
		}
	}
\caption{\label{fig:rand_lab} An illustration of the uniform random feature labeling in \ex{feat_6_unif_rand}. The top rectangle is the unit interval. The uniform random labels
are depicted along the interval
with vertical dotted lines at their locations. The indices $[6]$ are shown to the
left. A black circle shows appears when an index occurs in the feature with a given label.
The matrix representations of this feature allocation in \fig{rmx_feat} can be recovered
from this plot.
}
\end{figure}

With this order-of-appearance labeling in hand, we can assign a uniform random 
label to each feature. In particular, we assign the uniform random label $\randlabfa_{k}$
to the feature with order-of-appearance label $k$: $A_{1} = \{2,5,4\}$ gets label $\randlabfa_{1}$,
$A_{2} = \{3\}$ gets label $\randlabfa_{2}$, $A_{3} = \{3,4\}$ gets label $\randlabfa_{3}$,
$A_{4} = \{3\}$ gets label $\randlabfa_{4}$, and $A_{5} = \{6,4\}$ gets label $\randlabfa_{5}$.
Let $\indfar_{n}$ indicate the set of uniform random feature labels
for the features to which index $n$ belongs.
The uniform random labeling gives
\begin{equation}
	\label{eq:feat_6_unif_rand_lab}
	\indfar_{1} = \emptyset,
	\indfar_{2} = \{\randlabfa_1\},
	\indfar_{3} = \{\randlabfa_2,\randlabfa_3,\randlabfa_4\},
	\indfar_{4} = \{\randlabfa_1,\randlabfa_3,\randlabfa_5\},
	\indfar_{5} = \{\randlabfa_1\},
	\indfar_{6} = \{\randlabfa_5\}.
\end{equation}
\end{example}

\begin{lemma}
\label{lem:unif_rand_exch}
Give the features of an exchangeable feature allocation $\randfa_{\infty}$ uniform random labels,
and let $\indfar_{n}$ be the set of feature labels for features containing $n$.
So $\indfar_{n}$ is a random finite subset of $[0,1]$.
Then the sequence
$(\indfar_{n})_{n=1}^{\infty}$ is exchangeable.
\end{lemma}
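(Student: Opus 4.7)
The plan is to reduce the exchangeability of $(\indfar_n)$ to the exchangeability of $\randfa_\infty$, using the crucial fact that an iid uniform sequence has a joint distribution invariant under relabeling. The underlying reason is that the specific labeling scheme used to attach the $\randlabfa_k$ to features is essentially irrelevant: what matters is the unordered multiset of (feature, label) pairs.

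First, I would reformulate the labeling as follows. Conditional on $\randfa_\infty$, construct the random multiset
\[
M := \{(B, V_B) : B \in \randfa_\infty\},
\]
where each feature $B$ (counted with its multiplicity in the multiset $\randfa_\infty$) is paired with an independent $\unif[0,1]$ label $V_B$. One can then define $\indfar_n$ directly from $M$ as $\{V_B : n \in B\}$. I claim this realization has the same joint law as the one in the lemma statement using the order-of-appearance scheme plus the $\randlabfa_k$'s. Indeed, any two measurable enumerations of the features of $\randfa_\infty$ are related by a (possibly $\randfa_\infty$-dependent) permutation $\tau$ of the label indices, and applying $\tau$ to the iid sequence $(V_k)$ preserves its joint distribution. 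Consequently, the auxiliary uniforms $(U_k)$ used for order-of-appearance tie-breaking play no role in the joint law of $(\indfar_n)$.

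Second, given a finite permutation $\perm$, I would couple two such constructions. Let $M$ be built from $\randfa_\infty$ as above, and set $M^\perm := \{(\perm(B), V_B) : B \in \randfa_\infty\}$, using the same labels $V_B$. Since $\randfa_\infty \eqd \perm(\randfa_\infty)$ by assumption and the labels are iid independent of $\randfa_\infty$, we have $M \eqd M^\perm$. A direct computation gives
\[
\indfar_n^\perm := \{V_B : n \in \perm(B)\} = \{V_B : \perm^{-1}(n) \in B\} = \indfar_{\perm^{-1}(n)},
\]
so $(\indfar_n)_n \eqd (\indfar_n^\perm)_n = (\indfar_{\perm^{-1}(n)})_n$. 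Because $\perm^{-1}$ ranges over all finite permutations of $\mathbb{N}$ as $\perm$ does, this is precisely exchangeability of $(\indfar_n)$.

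The main obstacle is verifying the measurability underlying the first step: the random multiset $M$ must be well-defined on a suitable measurable space, and the map $M \mapsto (\indfar_n)_{n=1}^\infty$ into sequences of simple point processes on $[0,1]$ must be measurable. Once this is set up within the framework of simple point processes cited on p.~178 of \citet{kallenberg:2002:foundations}, the remainder of the argument is just an application of the exchangeability of the iid labels $(V_B)$ combined with the exchangeability hypothesis on $\randfa_\infty$.
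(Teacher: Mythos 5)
Your proof is correct and takes essentially the same route as the paper's: both arguments reduce the claim to the fact that the iid uniform labels are distributionally invariant under the (allocation- and tie-break-dependent, but label-independent) permutation of label indices induced by reindexing the data, combined with the exchangeability of $\randfa_{\infty}$ itself. Your multiset-of-pairs formulation simply makes explicit the permutation $\tau$ that the paper's proof absorbs into its measurable map $g((\randlabfa_{k})_{k},(U_{k})_{k},\randfa_{\infty})$, so the two proofs differ only in packaging.
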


\begin{proof}
Note that $(\indfar_{n})_{n=1}^{\infty} = g((\randlabfa_{k})_{k}, (U_{k})_{k}, \randfa_{\infty})$ for some measurable function $g$. 
So, for any finite permutation $\sigma$, we have that 
$(\indfar_{\sigma(n)})_{n} = g((\randlabfa_{\tau(k)})_{k}, (U_{k})_{k}, \sigma(\randfa_{\infty}))$
where $\tau$ is a finite permutation that is a function of $\rho$, $(U_{k})$, $\sigma$, and $\randfa_{\infty}$.
Now
$$
	((\randlabfa_{\tau(k)})_{k}, (U_{k})_{k}, \sigma(\randfa_{\infty}))
		\eqd ((\randlabfa_{k})_{k}, (U_{k})_{k}, \sigma(\randfa_{\infty}))
$$
since the iid sequence $(\randlabfa_{k})_{k}$, the iid sequence $(U_{k})_{k}$, and $\randfa_{\infty}$ are independent
by construction and
$$
	((\randlabfa_{k})_{k}, (U_{k})_{k}, \sigma(\randfa_{\infty}))
		\eqd ((\randlabfa_{k})_{k}, (U_{k})_{k}, \randfa_{\infty})
$$
since the
feature allocation is exchangeable and the independence used above still holds.
So
$$
	g((\randlabfa_{\tau(k)})_{k}, (U_{k})_{k}, \sigma(\randfa_{\infty}))
		\stackrel{d}{=} g((\randlabfa_{k})_{k}, (U_{k})_{k}, \randfa_{\infty})
$$
It follows that the sequence
$(\indfar_{n})_{n}$ is exchangeable.
\end{proof}
 
We can recover the full feature allocation $\randfa_{\infty}$ from
the sequence $\indfar_{1},\indfar_{2},\ldots$.
In particular, if $\{x_{1}, x_{2}, \ldots\}$ are the unique values in
$\{\indfar_{1}, \indfar_{2}, \ldots\}$, then the features are
$\{\{n: x_{k} \in \indfar_{n}\}: k = 1,2,\ldots\}$. The feature allocation
can similarly be recovered from the order-of-appearance label collections
$(\indfao_{n})$.

We can also recover a new \emph{random ordered feature allocation}
$\labrandfa_{N}$ from the sequence $(\indfar_{n})$. In particular,
$\labrandfa_{N}$ is the sequence---rather than the collection---of
features $\{n: x_{k} \in \indfar_{n}\}$ such that the feature with 
smallest label $\randlabfa_{k}$ occurs first, and so on. This 
construction achieves our goal of avoiding the combinatorial
factors needed to work with the distribution of $\randfa_{N}$,
while retaining exchangeability and consistency.

\begin{figure}
		\begin{minipage}[t]{0.10\textwidth}
			\vspace{0pt}
			\scalebox{0.5}{
				\input{mx_feat2.pstex_t}
			}
		\end{minipage}
		\begin{minipage}[t]{0.18\textwidth}
			\vspace{11pt}
			\scalebox{0.5}{
				\input{rmx_feat3.pstex_t}
			}
		\end{minipage}
		\begin{minipage}[t]{0.23\textwidth}
			\vspace{11pt}
			\scalebox{0.5}{
				\input{rmx_feat4.pstex_t}
			}
		\end{minipage}
		\begin{minipage}[t]{0.23\textwidth}
			\vspace{11pt}
			\scalebox{0.5}{
				\input{rmx_feat5.pstex_t}
			}
		\end{minipage}
		\begin{minipage}[t]{0.23\textwidth}
			\vspace{11pt}
			\scalebox{0.5}{
				\input{rmx_feat6.pstex_t}
			}
		\end{minipage}
\caption{\label{fig:rmx_feat} The same consistent sequence of feature allocations in \fig{mx_feat} but now with the uniform random order of \ex{feat_6_unif_rand_lab_spec} instead of the order of appearance illustrated in \fig{mx_feat}.
}
\end{figure}

\begin{example}[Feature labeling schemes (continued)] \label{ex:feat_6_unif_rand_lab_spec}
Once more, consider the feature allocation
$$
	\detfa_{6} = \{\{2,5,4\},\{3,4\},\{6,4\},\{3\},\{3\}\}.
$$
and the uniform random labeling in \eq{feat_6_unif_rand_lab}.
If it happens that $\randlabfa_{4} < \randlabfa_{5} < \randlabfa_{2} < \randlabfa_{1} < \randlabfa_{3}$, then
the random ordered feature allocation is
$$
	\labdetfa_{6} = (\{3\},\{6,4\},\{3\},\{2,5,4\},\{3,4\}).
$$
\end{example}

Recall that we were motivated by \ex{two_bern} to produce such a random
ordering scheme to avoid obfuscating combinatorial factors in the probability
of a feature allocation. From another
perspective, these factors arise because the random labeling is in some sense
more natural than alternative labelings; again, consider random labels
as iid parameters for each feature.
While order-of-appearance labeling is common due to its
pleasant aesthetic representation in matrix form (compare \figs{mx_feat} and \figss{rmx_feat}), one must be careful to remember
that the resulting label sets $(\indfao_{n})$ are not exchangeable. We will
use random labeling extensively below since, among other nice properties,
it preserves exchangeability of the sets of feature labels associated
with the indices.
	
\section{Exchangeable feature probability function} \label{sec:efpf}

In general,
given a probability of a random feature allocation, $\mbp(\randfa_{N} = \detfa_{N})$,
we can find the probability of a random ordered feature allocation
$\mbp(\labrandfa_{N} = \labdetfa_{N})$ as follows. Let $H$ be the 
number of distinct features of $\randfa_{N}$, and let $(\ok_{1},\ldots,\ok_{H})$
be the multiplicities of these distinct features in decreasing order. Then
\begin{equation}
	\label{eq:order_mult}
	\mbp(\labrandfa_{N} = \labdetfa_{N})
		= \binom{ K }{ \ok_{1}, \ldots, \ok_{H} }^{-1} \mbp(\randfa_{N} = \detfa_{N}),
\end{equation}
where
$$
	\binom{ K }{ \ok_{1}, \ldots, \ok_{H} } := \frac{ K! }{ \ok_{1}! \cdots \ok_{H}!}.
$$

For partitions, the effect of this multiplicative factor is the same across all
partitions with the same number of clusters; for some number of clusters $K$, it is just
$1 / K!$. In the general feature case, the multiplicative factor may be different for 
different feature configurations with the same number of features.

\begin{example}[A Bernoulli, two-feature allocation (continued)]

Consider $\randfa_{N}$ constructed
as in \ex{two_bern}. Denote the sizes of the two features by $M_{N,1}$ and $M_{N,2}$. Then
\begin{align}
	\nonumber
	\mbp(\labrandfa_{N} = \labdetfa_{N})
		&= \frac{1}{2} q_{A}^{M_{N,1}} (1-q_{A})^{N - M_{N,1}} q_{B}^{M_{N,2}} (1-q_{B})^{N - M_{N,2}} \\
		\nonumber
		& {} + \frac{1}{2} q_{A}^{M_{N,2}} (1-q_{A})^{N - M_{N,2}} q_{B}^{M_{N,1}} (1-q_{B})^{N - M_{N,1}} \\
		\label{eq:efpf_two_bern}
		&= \efpf(N, M_{N,1}, M_{N,2}).
\end{align}
Here, $\efpf$ is some function of the number of indices $N$
and the feature sizes $(M_{N,1}, M_{N,2})$ that we
note is symmetric in $(M_{N,1}, M_{N,2})$;
i.e., $\efpf(N,M_{N,1},M_{N,2}) = \efpf(N,M_{N,2},M_{N,1})$.
\end{example}

When the feature allocation probability admits the representation
\begin{equation}
	\label{eq:efpf}
	\mbp(\labrandfa_{N} = \labdetfa_{N}) = \efpf(N,|\blockfa_{1}|, \ldots, |\blockfa_{K}|)
\end{equation}
for every ordered feature allocation
$
	\labdetfa_{N} = (A_{1},\ldots,A_{K})
$
and some function $\efpf$ that is symmetric in all arguments after the first,
we call $\efpf$ the \emph{exchangeable feature probability function} (EFPF). 
We take care to note that the exchangeable partition probability function (EPPF), which
always exists for partitions, is not a special case of the EFPF. Indeed, the EPPF assigns zero
probability to any multiset in which an index occurs in more than one feature of the
multiset; e.g., $\{\{1\},\{2\}\}$ is a valid partition and a valid feature allocation of $[2]$,
but $\{\{1\},\{1\}\}$ is a valid feature allocation but not a valid partition of $[2]$.
Thus, the EPPF must examine the feature indices of a feature allocation to judge their
exclusivity and thereby assign a probability.
By contrast, the indices in the multiset provide no such information to the EFPF;
only the sizes of the multiset features are relevant in the EFPF case.

\begin{proposition}
\label{prop:efpf}
The class of exchangeable feature allocations with EFPFs is a strict but non-empty subclass
of the class of exchangeable feature allocations.
\end{proposition}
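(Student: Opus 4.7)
The plan is to prove non-emptiness and strictness separately. Non-emptiness is essentially already established by \ex{two_bern}: the derivation culminating in \eq{efpf_two_bern} writes the two-feature ordered-allocation probability as a symmetric function of the feature sizes, and it is routine to check that the zero- and one-feature cases in the same model also yield probabilities depending only on $N$ and the (at most one) block size; together these assemble into a valid EFPF. Exchangeability holds because the underlying indicators $\indpart_{n,A}$ and $\indpart_{n,B}$ are iid across $n$.

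For strictness I would construct a correlated analogue of \ex{two_bern}. Let $(X_{n,1}, X_{n,2})_{n \ge 1}$ be iid draws from a joint distribution on $\{0,1\}^2$ with weights $p_{ij} := \mbp(X_{n,1} = i,\, X_{n,2} = j)$, and let $\randfa_N$ be the multiset of non-empty sets among $\{n \le N : X_{n,1} = 1\}$ and $\{n \le N : X_{n,2} = 1\}$. This model is exchangeable by the iid structure. To show it admits no EFPF, it suffices to exhibit two ordered allocations with the same block-size profile but different probabilities.

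Take $N = 2$ and compare $\labdetfa_2 = (\{1\}, \{2\})$ with $\labdetfa'_2 = (\{1\}, \{1\})$, both of profile $(1,1)$. The unordered allocation $\{\{1\},\{2\}\}$ arises from two pair-indicator configurations, so \eq{order_mult} gives $\mbp(\labrandfa_2 = \labdetfa_2) = \tfrac12 \cdot 2 p_{10} p_{01} = p_{10} p_{01}$; whereas $\{\{1\},\{1\}\}$ arises only from the configuration $X_{1,1} = X_{1,2} = 1$, $X_{2,1} = X_{2,2} = 0$, and the corresponding multinomial factor in \eq{order_mult} is $\binom{2}{2}^{-1} = 1$, giving $\mbp(\labrandfa_2 = \labdetfa'_2) = p_{11} p_{00}$. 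Choosing $p_{ij}$ so that $p_{11} p_{00} \ne p_{10} p_{01}$ then precludes any function $\efpf$ from satisfying \eq{efpf}, because $\efpf(2,1,1)$ cannot equal both values at once.

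The main point requiring care is the combinatorial bookkeeping when moving from $\mbp(\randfa_N = \cdot)$ to $\mbp(\labrandfa_N = \cdot)$ via \eq{order_mult}: when two features coincide as subsets of $[N]$, the multinomial correction collapses from $1/K!$ to $1$, and this asymmetry between the distinct-feature and coincident-feature cases is exactly what generates the discrepancy $p_{10} p_{01}$ versus $p_{11} p_{00}$ that powers the counterexample.
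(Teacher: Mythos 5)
Your proof is correct and takes essentially the same approach as the paper: the strictness half is exactly the paper's \ex{two_feat} (the same $N=2$ comparison of a repeated singleton feature against two distinct singletons, yielding the discrepancy $p_{10}p_{01}$ versus $p_{11}p_{00}$). The only difference is that you witness non-emptiness with the two-feature Bernoulli model of \ex{two_bern}---whose EFPF form is already displayed in \eq{efpf_two_bern}, with the $K\le 1$ cases routine as you note---rather than the three-parameter IBP the paper cites, which is an equally valid and somewhat more elementary choice.
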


\begin{proof}
\ex{epf_ibp} below shows that the class of feature allocations with EFPFs 
is non-empty, and \ex{two_feat} below establishes that there exist simple 
exchangeable feature allocations without EFPFs.
\end{proof}

\begin{example}[Three-parameter Indian buffet process] \label{ex:epf_ibp}
The Indian buffet process (IBP)~\citep{griffiths:2006:infinite}
is a generative model for a random feature allocation that is 
specified recursively in a manner akin to the Chinese restaurant 
process~\citep{aldous:1985:exchangeability} in the case of partitions. 
The metaphor involves a set of ``customers'' that enter a restaurant
and sample a set of ``dishes.'' Order the customers by placing them 
in one-to-one correspondence with the indices $n \in \mathbb{N}$. 
The dishes in the restaurant correspond to feature labels. 
Customers in the Indian buffet can sample any non-negative 
integer number of dishes. The set of dishes chosen by a customer 
$n$ is just $\indfao_{n}$, the collection of feature labels for 
the features to which $n$ belongs, and the procedure described 
below provides a way to construct $\indfao_{n}$ recursively.

\begin{figure}[t]
	\centerline{
		\scalebox{0.5}{
			\input{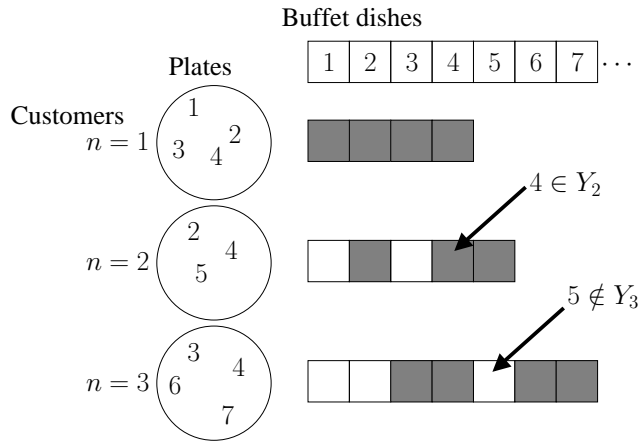}
		}
	}
	\caption{\label{fig:ibp} Illustration of an Indian buffet process in the order-of-appearance representation of \fig{mx_feat}. The buffet (\emph{top}) consists of a vector of dishes, corresponding to features. Each customer---corresponding to a data point---who enters the restaurant first decides whether or not to choose dishes that the other customers have already sampled.  The customer then selects a random number of new dishes, not previously sampled by any customer. A gray box in position $(n,k)$ indicates customer $n$ has sampled dish $k$, and a white box indicates the customer has not sampled the dish. In the example, the second customer has sampled exactly those dishes indexed by 2, 4, and 5: $\indfao_{2} = \{2,4,5\}$.}
\end{figure}

We describe an extended version \citep{teh:2009:indian,broderick:2012:beta}
of the Indian buffet that includes two extra parameters beyond the single 
\emph{mass parameter} $\bpmass$ ($\bpmass > 0$) originally specified by 
\citet{griffiths:2006:infinite}; in particular, we include a \emph{concentration 
parameter} $\bpconc$ ($\bpconc > 0$) and a \emph{discount parameter} $\bpdisc$ 
($\bpdisc \in [0,1)$).  We abbreviate this three-parameter IBP as ``3IBP.'' 
The single-parameter IBP may be recovered by setting $\bpconc = 1$ and $\alpha = 0$.

We start with a single customer, who enters the buffet and chooses $K^{+}_{1} \sim \pois(\bpmass)$ dishes. None of the dishes have been sampled by any other customers since no other customers have yet entered the restaurant. An order-of-appearance labeling gives the dishes labels $1,\ldots,K^{+}_{1}$ if $K^{+}_{1} > 0$.

Recursively, the $n$th customer chooses which dishes to sample in two phases. First, for each dish $k$ that has previously been sampled by any customer in $1,\ldots,n-1$, customer $n$ samples dish $k$ with probability
$$
	\frac{M_{n-1,k} - \bpdisc}{\bpconc + n - 1},
$$
for $M_{n,k}$ equal to the number of customers indexed $1,\ldots,n$ who have tried dish $k$. As each dish represents a feature, sampling a dish represents that the customer index $n$ belongs to that feature. And $M_{n,k}$ is the size of the feature labeled $k$ in the feature allocation of $[n]$. 

Next, customer $n$ chooses
$$
	K^{+}_{n} \sim \pois\left( \bpmass \frac{\Gamma(\bpconc + 1)}{\Gamma(\bpconc + n)}
		\cdot \frac{\Gamma(\bpconc + \bpdisc - 1 + n)}{\Gamma(\bpconc + \bpdisc)} \right)
$$
new dishes to try. If $K^{+}_{n} > 0$, then the dishes receive unique order-of-appearance labels $K_{n-1}+1, \ldots, K_{n}$. Here, $K_{n}$ represents the number of sampled dishes after $n$ customers: $K_{n} = K_{n-1} + K^{+}_{n}$ (with base case $K_{0} = 0$).

With this generative model in hand, we can find the probability of a particular feature allocation. We discover its form by enumeration. At each round $n$, we have a Poisson number of new features, $K^{+}_{n}$, represented. The probability factor associated with these choices is a product of Poisson densities:
$$
	\prod_{n=1}^{N} \frac{1}{K^{+}_{n}!} [C(n,\bpmass,\bpconc,\bpdisc)]^{K^{+}_{n}} \exp\left( -C(n,\bpmass,\bpconc,\bpdisc) \right),
$$
where
$$
	C(n,\bpmass,\bpconc,\bpdisc) := \bpmass \frac{\Gamma(\bpconc + 1)}{\Gamma(\bpconc + n)}
		\cdot \frac{\Gamma(\bpconc + \bpdisc - 1 + n)}{\Gamma(\bpconc + \bpdisc)}.
$$

Let $R_{k}$ be the round on which the $k$th dish, in order of appearance, is first chosen. Then the denominators for future dish choice probabilities are the factors in the product $(\bpconc + R_{k}) \cdot (\bpconc + R_{k} + 1) \cdots (\bpconc + N - 1)$. The numerators for the times when the dish is chosen are the factors in the product $(1 - \bpdisc) \cdot (2 - \bpdisc) \cdots (M_{N,k}-1 - \bpdisc)$. The numerators for the times when the dish is not chosen yield $(\bpconc + R_{k} - 1 + \bpdisc) \cdots (\bpconc + N - 1 - M_{N,k} + \bpdisc)$.
Let $\blockfa_{n,k}$ represent the collection of indices in the feature with label $k$ after $n$ customers have entered the restaurant. Then $M_{n,k} = |\blockfa_{n,k}|$.

Finally, let $\ok_{1},\ldots,\ok_{H}$ be the multiplicities of distinct features formed by this model. We note that there are
$$
	\left[ \prod_{n=1}^{N} K^{+}_{n}! \right] / \left[ \prod_{h=1}^{H} \ok_{h}! \right]
$$
rearrangements of the features generated by this process that all yield the same feature allocation. Since they all have the same generating probability, we simply multiply by this factor to find the feature allocation probability.

Multiplying all factors together\footnote{Readers curious about how the $R_{k}$ terms disappear
may observe that
$$
	\prod_{k=1}^{K_{N}} \frac{\Gamma(\bpconc + R_{k})}{\Gamma(\bpconc + R_{k} + \bpdisc - 1)}
		= \prod_{n=1}^{N} \left( \frac{\Gamma(\bpconc + n)}{\Gamma(\bpconc + n + \bpdisc - 1)} \right)^{K_{N}^{+}}.
$$
}
and taking $\detfa_{n} = \{\blockfa_{N,1}, \ldots, \blockfa_{N,K_{N}}\}$ yields
\begin{align}
	\nonumber
	\lefteqn{ \mbp(\randfa_{N} = \detfa_{N}) } \\
		\nonumber
		&= \left( \prod_{h=1}^{H} \ok_{h}! \right)^{-1}
			\left( \bpmass \frac{\Gamma(\bpconc + 1)}{\Gamma(\bpconc + \bpdisc)} \right)^{K_{N}} 
			\exp\left( -\sum_{n=1}^{N} \bpmass \frac{\Gamma(\bpconc + 1)}{\Gamma(\bpconc + n)}
			\cdot \frac{\Gamma(\bpconc + \bpdisc - 1 + n)}{\Gamma(\bpconc + \bpdisc)} \right) \\
		\nonumber
		& \quad ~ \cdot \left[ \prod_{k=1}^{K_{N}} 
				\frac{\Gamma(M_{N,k} - \bpdisc)}{\Gamma(1-\bpdisc)}
				\cdot \frac{\Gamma(\bpconc + N - M_{N,k} + \bpdisc)}{\Gamma(\bpconc + N)}
			\right].
\end{align}

It follows from \eq{order_mult} that the probability of a uniform random ordering of the feature allocation is
\begin{align}
	\nonumber
	\lefteqn{ \mbp(\labrandfa_{N} = \labdetfa_{N}) } \\ 
		\nonumber
		&= \frac{1}{K_{N}!} \left( \bpmass \frac{\Gamma(\bpconc + 1)}{\Gamma(\bpconc + \bpdisc)} \right)^{K_{N}} 
			\exp\left( -\sum_{n=1}^{N} \bpmass \frac{\Gamma(\bpconc + 1)}{\Gamma(\bpconc + n)}
			\cdot \frac{\Gamma(\bpconc + \bpdisc - 1 + n)}{\Gamma(\bpconc + \bpdisc)} \right) \\
		\label{eq:ibp_efpf}
		& \quad ~ \cdot \left[ \prod_{k=1}^{K_{N}} 
				\frac{\Gamma(M_{N,k} - \bpdisc)}{\Gamma(1-\bpdisc)}
				\cdot \frac{\Gamma(\bpconc + N - M_{N,k} + \bpdisc)}{\Gamma(\bpconc + N)}
			\right].
\end{align}

The distribution of $\labrandfa_{N}$ has no dependence on the ordering of the indices in $[N]$. Hence,
the distribution of $\randfa_{N}$ depends only on the same quantities---the number of indices and the feature sizes---and the feature multiplicities. So we see that the 3IBP construction yields an exchangeable random feature allocation. Consistency follows from the recursive construction and exchangeability. Therefore, \eq{ibp_efpf} is seen to be in EFPF form given by \eq{efpf}.
\end{example}

The three-parameter Indian buffet process has an EFPF representation, but the following simple model does not.

\begin{example}[A general two-feature allocation]
\label{ex:two_feat}
We here describe an exchangeable, consistent random feature allocation
whose (ordered) distribution does not depend only on the number of indices $N$
and the sizes of the features of the
allocation.

Let $p_{10}, p_{01}, p_{11}, p_{00}$ be fixed frequencies that sum to one. 
Let $\indfaq_{n}$ represent the collection of features to which index $n$ belongs.
For $n \in \{1,2\}$, choose $\indfaq_{n}$ independently and identically
according to:
$$
	\indfaq_{n} = \left\{ \begin{array}{ll}
			\{1\} & \textrm{with probability } p_{10} \\
			\{2\} & \textrm{with probability } p_{01} \\
			\{1,2\} & \textrm{with probability } p_{11} \\
			\emptyset & \textrm{with probability } p_{00}.
		\end{array} \right.
$$
We form a feature allocation from these labels as follows. For each label ($1$ or $2$),
collect those indices $n$ with the given label appearing in $\indfaq_{n}$ to form a feature.

Now consider two possible outcome feature allocations:
$\detfa_{2} = \{\{2\}, \{2\}\}$, and $\detfa'_{2} = \{\{1\},\{2\}\}$.
The probability of any ordering $\labdetfa_{2}$ of $\detfa_{2}$ under this model is
$$
	\mbp(\labrandfa_{2} = \labdetfa_{2})
		= p_{10}^{0} \; p_{01}^{0} \; p_{11}^{1} \; p_{00}^{1}.
$$
The probability of any ordering $\labdetfa'_{2}$ of $\detfa'_{2}$ is
$$
	\mbp(\labrandfa_{2} = \labdetfa'_{2})
		= p_{10}^{1} \; p_{01}^{1} \; p_{11}^{0} \; p_{00}^{0}.
$$
It follows from these
two probabilities that we can choose values of $p_{10},p_{01},p_{11},p_{00}$ such
that $\mbp(\labrandfa_{2} = \labdetfa_{2}) \ne \mbp(\labrandfa_{2} = \labdetfa'_{2})$. But 
$\labdetfa_{2}$ and $\labdetfa'_{2}$ have the same feature counts and $N$
value ($N = 2$). So there can be no such symmetric function $\eppf$,
as in \eq{efpf_two_bern}, for this model.
\end{example}

\section{The Kingman paintbox and feature paintbox} \label{sec:paintbox}

Since the class of exchangeable feature models with EFPFs is a strict subclass of
the class of exchangeable feature models, it remains to find a characterization
of the latter class. Noting that the sequence of feature collections $\indfar_{n}$ 
is an exchangeable sequence when the uniform random labeling of features is used,
we might turn to the de Finetti mixing measure of this exchangeable sequence for
such a characterization.

Indeed, in the partition case, the Kingman paintbox
\citep{kingman:1978:representation,aldous:1985:exchangeability}
provides just such a characterization.
\begin{theorem}[Kingman paintbox]
\label{thm:paintbox}
Let $\randpart_{\infty} := (\randpart_{n})_{n=1}^{\infty}$ be an exchangeable random partition of $\mathbb{N}$, and let $(\cfreqord_{n,k}, k \ge 1)$ be the decreasing rearrangement of cluster sizes of $\randpart_{n}$ with $\cfreqord_{n,k} = 0$ if $\randpart_{n}$ has fewer than $k$ clusters. Then $\cfreqord_{n,k}/n$ has an almost sure limit $\pfreqord_{k}$ as $n \rightarrow \infty$ for each $k$. Moreover, the conditional distribution of $\randpart_{\infty}$ given $(\pfreqord_{k}, k \ge 1)$ is as if $\randpart_{\infty}$ were generated by random sampling from a random distribution with ranked atoms $(\pfreqord_{k}, k \ge 1)$.
\end{theorem}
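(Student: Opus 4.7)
The plan is to reduce the partition statement to an application of de Finetti's theorem via the uniform random labeling scheme from Section 3. Even though Section 3 developed this labeling for feature allocations, a partition is a special case (each index belongs to exactly one block), and the construction specializes cleanly: attach an iid $\mathrm{Unif}[0,1]$ label $\tilde\pi_k$ to each block (say indexed by order of appearance), and set $\xi_i := \tilde\pi_{k}$ where $k$ is the unique block containing $i$. Then $\xi_i = \xi_j$ iff $i$ and $j$ lie in the same block, so the sequence $(\xi_i)$ encodes $\Pi_\infty$ completely. Lemma~\ref{lem:unif_rand_exch} (applied to the partition, viewed as a feature allocation) implies $(\xi_i)_{i=1}^\infty$ is exchangeable.

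Next, I would apply the classical de Finetti theorem to $(\xi_i)$, which takes values in the Polish space $[0,1]$. This yields a random probability measure $\mu$ on $[0,1]$ such that, conditional on $\mu$, the $\xi_i$ are iid with common law $\mu$; moreover, $\mu$ is the almost sure weak limit of the empirical measures $n^{-1}\sum_{i=1}^n \delta_{\xi_i}$. Decompose
\begin{equation*}
\mu = \sum_{k\ge 1} P_k\, \delta_{a_k} + \mu_c,
\end{equation*}
where $a_1, a_2,\ldots$ are the distinct atoms of $\mu$ enumerated so that $P_1 \ge P_2 \ge \cdots$, and $\mu_c$ is the continuous (``dust'') part. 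Since the block of $\Pi_n$ through index $i$ is exactly $\{j\le n:\xi_j=\xi_i\}$, the block sizes of $\Pi_n$ are precisely the multiplicities of the atoms appearing in $(\xi_1,\ldots,\xi_n)$, together with singletons coming from indices whose $\xi_i$ is not shared by any other $\xi_j$.

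For the frequency claim, conditional on $\mu$ the strong law of large numbers yields $n^{-1}|\{i\le n:\xi_i=a_k\}| \to \mu(\{a_k\}) = P_k$ almost surely for each $k$. To match this with the ranked block sizes $M_{n,k}$, I would argue that for each $k$, the $k$th largest atom $a_k$ is (eventually) the $k$th most-represented value in $\xi_1,\ldots,\xi_n$, and that all ``dust'' contributions to $M_{n,j}/n$ (for any $j$) vanish in the limit since $\mu_c$ is nonatomic (each value in the support of the continuous part is hit at most once a.s., so the corresponding block sizes stay bounded). This pushes the SLLN limit through the ranking operation and gives $M_{n,k}/n\to P_k$ a.s.

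Finally, the conditional description of $\Pi_\infty$ given $(P_k)$ follows: given $\mu$, the partition $\Pi_\infty$ is constructed by iid sampling from $\mu$ and identifying indices $i\sim j$ iff $\xi_i=\xi_j$. This is precisely the paintbox recipe in which an atom $a_k$ of mass $P_k$ ``paints'' an asymptotic $P_k$-fraction of the indices the same color, while $\mu_c$ produces singleton blocks. The statement as given, conditioning only on the ranked atoms $(P_k)$, follows because the additional information in $\mu$ beyond $(P_k)$ (the atom locations and the continuous part) is independent of the induced partition structure once blocks are identified by equality of labels. The main obstacle is this last identification: one must verify that ties in block sizes and the interplay between atomic and continuous parts do not disrupt the a.s.\ convergence $M_{n,k}/n\to P_k$ for each fixed $k$, and that the conditional law depends on $\mu$ only through $(P_k)$. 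Both points go through by a standard measurability argument and the exchangeability of $(\xi_i)$.
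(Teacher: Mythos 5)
The paper does not actually prove this statement: Theorem~\ref{thm:paintbox} is quoted as a classical result with attribution to Kingman and Aldous, so there is no in-paper proof to compare yours against. Your argument is the standard de Finetti route (essentially Aldous's proof of Kingman's theorem): uniform random block labels produce an exchangeable $[0,1]$-valued sequence $(\xi_i)$ that encodes the partition, de Finetti supplies a directing random measure $\mu$, the atomic/continuous decomposition of $\mu$ separates macroscopic blocks from dust, the conditional SLLN identifies the ranked atom masses as the asymptotic block frequencies, and the conditional law given $(P_k)$ follows because the conditional law given $\mu$ of the induced partition is a symmetric function of the atom masses alone. This is correct, and it is in fact the same machinery the paper itself deploys (uniform labels, Lemma~\ref{lem:unif_rand_exch}, de Finetti, SLLN) in its proof of the feature paintbox, Theorem~\ref{thm:feature_paintbox}; your proof is the partition specialization of that strategy. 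The one step you rightly flag as delicate---passing from $n^{-1}|\{i\le n:\xi_i=a_k\}|\to P_k$ for each atom to convergence of the \emph{ranked} block sizes $M_{n,k}/n\to P_k$---does go through: ties among the $P_k$ may cause the identity of the $k$th largest block to oscillate, but not the value of the $k$th largest relative count, and dust blocks have size one almost surely (since, given $\mu$, two indices share a non-atomic label with probability zero), so they cannot interfere with any fixed rank $k$ for which $P_k>0$. If you wanted to make that step airtight you would write out the elementary $\limsup/\liminf$ sandwich for the $k$th order statistic, but as a proof outline nothing is missing.
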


\begin{figure}
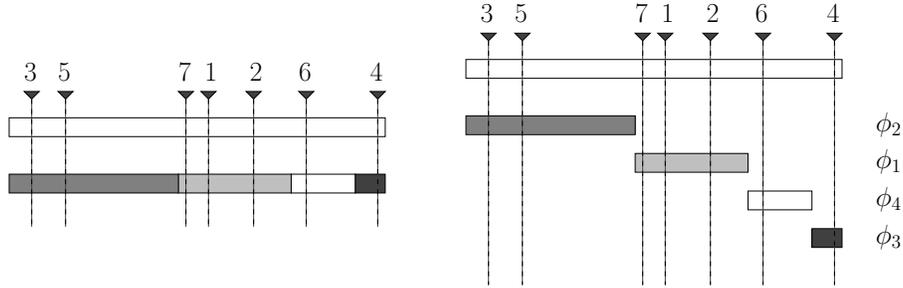

	\begin{minipage}{0.45\textwidth}
		\centerline{
			\scalebox{0.5}{
				\input{kingman.pstex_t}
			}
		}
	\end{minipage}
	\hspace{0.05\textwidth}
	\begin{minipage}{0.45\textwidth}
		\centerline{
			\scalebox{0.5}{
				\input{kingman_levels.pstex_t}
			}
		}
	\end{minipage}
\caption{\label{fig:kingman}
\emph{Left}: An example Kingman paintbox. The upper rectangle
represents the unit interval. The lower rectangles represent a partition of the
unit interval into four subintervals corresponding to four clusters. The horizontal locations
of the seven vertical lines represent seven uniform random draws from the unit interval.
The resulting partition of $[7]$ is $\{\{3,5\}, \{7,1,2\}, \{6\}, \{4\}\}$. \emph{Right}:
An alternate representation of the same Kingman paintbox, now with each subinterval
separated out into its own vertical level. To the right of each cluster subinterval is a
uniform random label (with index determined by order of appearance) for the cluster. 
}
\end{figure}

When the partition clusters are labeled with uniform random labels
rather than by the ranking in the statement of the theorem above,
Kingman's paintbox provides the de Finetti mixing measure for the sequence of
partition labels of each index $n$. Two representations of an example Kingman
paintbox are illustrated in \fig{kingman}. The Kingman paintbox is so named
since we imagine each subinterval of the unit interval as containing paint of 
a certain color; the colors have a one-to-one mapping with the uniform random
cluster labels.
A random draw from the unit interval is painted with the color of 
the Kingman paintbox subinterval into which it falls. While \fig{kingman}
depicts just four subintervals and hence at most four clusters,
the Kingman paintbox may in general have a countable number
of subintervals and hence clusters. Moreover, these subintervals
may themselves be random.

Note that the ranked atoms need not sum to one; in general, $\sum_{k} \pfreqord_{k} \le 1$.
When random sampling from the Kingman paintbox does not select some atom $k$
with $\pfreqord_{k} > 0$,
a new cluster is formed but it is necessarily never selected again for another index.
In particular, then, a corollary of the Kingman paintbox theorem is that there are two types of clusters:
those with unbounded size as the number of indices $N$ grows to infinity and those
with exactly one member as $N$ grows to infinity; the latter are sometimes referred 
to as \emph{singletons} or collectively as \emph{Kingman dust}.
In the feature case, we impose one further regularity condition that essentially
rules out dust.
Consider any feature allocation $\randfa_{\infty}$.
Recall that we use the notation $\indfar_{n}$ to indicate the set of features to which
index $n$ belongs. We assume that, for each $n$, with probability one there
exists some $m$ with $m \ne n$ such that $\indfar_{m} = \indfar_{n}$. Equivalently, with probability
one there is no index with a unique feature collection.
We call a random feature allocation that obeys this condition a
\emph{regular feature allocation}.

We can prove the following theorem for the feature case,
analogous to the Kingman paintbox construction for partitions.

\begin{theorem}[Feature paintbox]
\label{thm:feature_paintbox}
Let $\randfa_{\infty} := (\randfa_{n})$ be an exchangeable, consistent, regular
random feature allocation of $\mathbb{N}$.
There exists a random sequence $(C_{k})_{k=1}^{\infty}$ 
such that $C_{k}$ is a countable union of subintervals of $[0,1]$ (and may be 
empty) 
and such that
$\randfa_{\infty}$ has the same distribution as $\randfa'_{\infty}$
where $\randfa'_{\infty}$ is
generated as follows.
Randomly sample $(U'_{n})_{n}$ iid uniform in $[0,1]$. 
Let $\indfaq_{n} := \{k: U'_{n} \in C_{k}\}$ represent a collection of feature
labels for index $n$, and let $\randfa'_{\infty}$ be the induced 
feature allocation from these label collections.
\end{theorem}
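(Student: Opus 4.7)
The plan is to leverage Lemma \ref{lem:unif_rand_exch} together with de Finetti's theorem to produce a directing measure for the exchangeable sequence $(\indfar_n)$, and then realize this directing measure explicitly as a paintbox of subintervals. Concretely, give $\randfa_\infty$ a uniform random labeling, so that $(\indfar_n)_{n \ge 1}$ is exchangeable by Lemma \ref{lem:unif_rand_exch}. Since each $\indfar_n$ lies in the Polish space of finite subsets of $[0,1]$, de Finetti's theorem produces a random probability measure $\nu$ on this space such that, conditionally on $\nu$, the $\indfar_n$ are iid $\nu$.

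The next step is to observe that $\nu$ is almost surely a discrete measure supported on finite subsets of the (random) countable set $\{\randlabfa_k : k \ge 1\}$. Indeed, $\randfa_\infty$ has at most countably many features and every $\indfar_n$ is a subset of these feature labels, so the support of $\nu$ consists of at most countably many finite subsets of $[0,1]$. The regularity assumption rules out singleton features and thereby ensures that for each feature label $\randlabfa_k$ actually present, $\nu(\{S : \randlabfa_k \in S\}) > 0$; otherwise the corresponding feature would contribute no indices under the conditional iid sampling from $\nu$, contradicting its existence.

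I would then construct the paintbox as follows. Conditional on $\nu$ and $(\randlabfa_k)$, measurably enumerate the support of $\nu$ as $S_1, S_2, \ldots$ with weights $p_j := \nu(\{S_j\})$, partition $[0,1]$ into consecutive half-open intervals $I_1, I_2, \ldots$ with $|I_j| = p_j$, and set
\[
    C_k := \bigcup_{j \,:\, \randlabfa_k \in S_j} I_j,
\]
a countable union of subintervals of $[0,1]$. Drawing $U'_n$ iid uniform on $[0,1]$ independent of everything else and setting $\indfaq_n := \{k : U'_n \in C_k\}$, one checks that $\indfaq_n$ is conditionally iid with $\mbp(\indfaq_n = \{k : \randlabfa_k \in S_j\} \mid \nu) = p_j$. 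Matching to $\indfar_n$ via the bijection $k \leftrightarrow \randlabfa_k$, the sequences $(\indfaq_n)$ and $(\indfar_n)$ have the same joint distribution. Since $\randfa_\infty$ can be recovered measurably from $(\indfar_n)$ (as discussed in the paragraph following Lemma \ref{lem:unif_rand_exch}), the induced $\randfa'_\infty \eqd \randfa_\infty$.

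The main technical obstacle is the analysis of $\nu$: verifying that it is discrete, that each feature corresponds to a support point with strictly positive marginal probability, and that the enumeration and partition can be chosen measurably in $\nu$. The regularity assumption is central here: without it, singleton features would manifest as ``dust'' not captured by any $C_k$, which is precisely the phenomenon that appears in the Kingman paintbox theorem (where the ranked atoms can fail to sum to one). Once these measurability and positivity points are handled, the explicit realization of the $C_k$ as countable unions of intervals is routine.
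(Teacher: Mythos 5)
Your proposal follows essentially the same route as the paper's proof: uniformly label the features, invoke \lem{unif_rand_exch} to get an exchangeable sequence $(\indfar_{n})$, take its de Finetti directing measure $\nu$, enumerate the atoms of $\nu$, tile $[0,1]$ with intervals whose lengths are the atom masses, and let $C_{k}$ be the union of the intervals for atoms containing $\randlabfa_{k}$. The one step whose justification does not hold up is the discreteness of $\nu$. You argue that because $\randfa_{\infty}$ has countably many features, every $\indfar_{n}$ lies in a countable set of finite subsets, and hence $\nu$ is discrete. That inference is invalid: the realized values of any sequence that is iid given $\nu$ always form a countable set, whether or not $\nu$ is diffuse. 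A concrete counterexample is the exchangeable, consistent, but non-regular allocation in which each index $n$ carries its own singleton feature $\{n\}$: there are countably many features, yet the directing measure of $(\indfar_{n})$ is the diffuse law of $\{U\}$ for $U$ uniform on $[0,1]$. What actually forces $\nu$ to be purely atomic with atom masses summing to one is regularity, which you invoke only for the separate positivity claim. The paper makes this precise by passing to the induced partition in which $n$ and $m$ are equivalent if and only if $\indfar_{n} = \indfar_{m}$; this is an exchangeable partition with no singletons by regularity, so the Kingman paintbox theorem (\thm{paintbox}) gives ranked frequencies $q_{j} > 0$ with $\sum_{j} q_{j} = 1$. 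Alternatively, one can argue directly that any draw from a diffuse component of $\nu$ is conditionally almost surely never repeated, producing an index with a unique feature collection and contradicting regularity. Note that this same fact is also needed to ensure that your intervals $I_{j}$ with $|I_{j}| = p_{j}$ actually tile all of $[0,1]$, i.e., that $\sum_{j} p_{j} = 1$. With that repair, the remainder of your construction and the distributional identification coincide with the paper's argument.
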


\begin{proof}
Given $\randfa_{\infty}$ as in the theorem statement, we can construct
$(\indfar_{n})_{n=1}^{\infty}$ as in \lem{unif_rand_exch}. Then, according to
\lem{unif_rand_exch},
$(\indfar_{n})_{n=1}^{\infty}$ is an exchangeable sequence.
Note that $\indfar_{n}$ defines a partition:
$n \sim m$ (i.e., $n$ and $m$ belong to the same cluster of the partition)
if and only if $\indfar_{n} = \indfar_{m}$.
This partition is exchangeable since the feature allocation is.
Moreover, since we assume there are no singletons in the induced
partition (by regularity), the Kingman paintbox theorem implies that the
Kingman paintbox atoms sum to one.

By de Finetti's theorem \citep{aldous:1985:exchangeability},
there exists $\alpha$ such that $\alpha$ is the directing random measure for $(\indfar_{n})$. Condition on $\alpha = \mu$. Write $\mu = \sum_{j=1}^{\infty} q_{j} \delta_{x_{j}}$, where the $q_{j}$ satisfy $q_{j} \in (0,1]$ and are written in monotone decreasing order: $q_{1} \ge q_{2} \ge \cdots$. The condition that the atoms of the paintbox sum to one translates to $\sum_{j=1}^{\infty} q_{j} = 1$. The $(x_{j})$ are the (countable) unique values of $\indfar_{n}$, ordered to agree with the $q_{j}$.
The strong law of large numbers yields
$$
	N^{-1} \#\{n: n \le N, \indfar_{n} = x_{j}\} \rightarrow q_{j}, \quad N \rightarrow \infty.
$$

Since $\sum_{j=1}^{\infty} q_{j} = 1$, we can partition the unit interval into subintervals of length $q_{j}$. The $j$th
such subinterval starts at $s_{j} := \sum_{l=1}^{j-1} q_{l}$ and ends at
$e_{j} := s_{j+1}$.
For $k = 1,2,\ldots$, define $C_{k} := \bigcup_{j: \randlabfa_{k} \in x_{j}} [s_{j},e_{j})$.
We call the $(C_{k})_{k=1}^{\infty}$ the \emph{feature paintbox}.

Then $\randfa_{\infty}$ has the same
distribution as the following construction. Let $(U'_{1},U'_{2},\ldots)$ be
an iid sequence of uniform random variables. For each $n$,
define $\indfaq_{n} = \{k: U'_{n} \in C_{k}\}$ to be the collection of 
features, now labeled by positive integers, to which $n$ belongs. Let $\randfa'_{\infty}$
be the feature allocation induced by the $(\indfaq_{n})$.
\end{proof}

A point to note about this feature paintbox construction is that
the ordering of the feature paintbox subsets $C_{k}$ in the proof is given by the
order of appearance of features in the original feature allocation $\randfa_{\infty}$.
This ordering stands in contrast to the ordering of atoms by size
in the Kingman paintbox. Making use of such a size-ordering would be more difficult in
the feature case due to the non-trivial intersections of feature subsets. A particularly
important implication is that the conditional distribution of $\randfa_{\infty}$ 
given $(C_{k})_{k}$ is not the same as that of $\randfa'_{\infty}$ given 
$(C_{k})_{k}$ (cf.\ \citet{pitman:1995:exchangeable} for
similar ordering issues in the partition case).

\begin{figure}
	\centerline{
		\scalebox{0.5}{
			\input{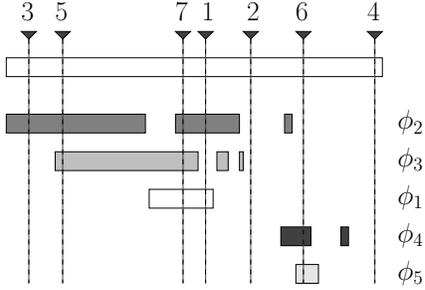}
		}
	}
\caption{\label{fig:feat_paint}
An example feature paintbox. The top rectangle
represents the unit interval. Each vertical level below the top rectangle
represents a subset of the unit interval corresponding to a feature.
To the right of each subset is a uniform random label for the feature.
For example, using the notation of \thm{feature_paintbox},
the topmost subset is $C_{2}$ corresponding to feature label $\randlabfa_{2}$.
The vertical dashed lines represent uniform random draws; i.e., $U'_{n}$
for index $n$.
The resulting feature allocation of $[7]$ for this realization of the construction is
$\{\{3,5,7,1\}, \{5,7\}, \{7,1\}, \{6\}, \{6\}\}$. The collection of feature labels
for index $7$ is $\indfaq_{7} = \{\randlabfa_{2}, \randlabfa_{3}, \randlabfa_{1}\}$.
The collection of feature labels for index $4$ is $\indfaq_{4} = \emptyset$.
}
\end{figure}

An example feature paintbox is illustrated in \fig{feat_paint}. Again,
we may think of each feature paintbox subset as containing paint of a certain color
(where these colors have a one-to-one mapping with the uniform
random labels). Draws from the unit interval to determine the feature allocation may now be painted with
some subset of these colors rather than just a single color.

Next, we revisit earlier examples to find their feature paintbox representations.

\begin{figure}
	\centerline{
		\scalebox{0.5}{
			\input{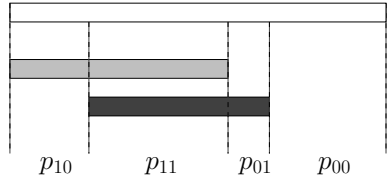}
		}
	}
\caption{\label{fig:two_feat_paint}
A feature paintbox for the two-feature allocation
in \ex{two_feat}. The top rectangle is the unit interval. The 
middle rectangle is the feature paintbox subset for feature 1.
The lower rectangle is the feature paintbox subset for feature 2.}
\end{figure}

\begin{example}[A general two-feature allocation (continued)]
\label{ex:two_feat_paint}

The feature paintbox for the random feature allocation
in \ex{two_feat} consists of two features. The total measure of the 
paintbox subset for
feature 1 is $p_{10} + p_{11}$. The total measure of the paintbox
subset for feature 2 is $p_{01} + p_{11}$. The total measure of the intersection
of these two subsets is $p_{11}$. A depiction of this paintbox appears in
\fig{two_feat_paint}.
\end{example}

\begin{example}[Three-parameter Indian buffet process (continued)]
\label{ex:extend_ibp_bp_tmp}

The 3IBP turns out to be an instance of a general class of exchangeable 
feature models that we refer to as \emph{feature frequency models}.  This class 
of models not only provides a straightforward way to construct feature 
paintbox representations in general, but also plays a key role in our 
general theory, providing a link between feature paintboxes and EFPFs.
In the following section, we define feature frequency models, develop the general 
construction of paintboxes from feature frequency models, and then return to 
the construction of the feature paintbox for the 3IBP as an example.
We subsequently turn to the general theoretical characterization of feature frequency
models.
\end{example}

\section{Feature frequency models} \label{sec:freq}

We now discuss a general class of exchangeable feature models for which it is 
straightforward to describe the feature paintbox.
Let $(V_{k})$ be a sequence of (not necessarily independent) random variables
with values in $[0,1]$ such that $\sum_{k=1}^{\infty} V_{k} < \infty$ almost surely.
Let $\randlabfa_{k} \stackrel{iid}{\sim} \unif[0,1]$
and independent of the $(V_{k})$.
A \emph{feature frequency model}
is built around a random measure
$\bproc = \sum_{k=1}^{\infty} V_{k} \delta_{\randlabfa_{k}}$.
We may draw a feature allocation given $\bproc$ as follows. For each data point $n$,
independently draw its features like so: for each feature indexed by $k$, 
independently make a Bernoulli draw with success probability $V_{k}$. If
the draw is a success, $n$ belongs to the feature indexed by $k$ (i.e.,\ the feature with
label $\randlabfa_{k}$). If the draw is a failure, $n$ does not belong to
the feature indexed by $k$. The feature allocation is induced in the usual 
way from these labels.

The condition that the frequencies have an almost surely finite sum
guarantees, by the Borel-Cantelli lemma, that the number of features
exhibited by any index $n$ is almost surely finite, as required in the definition
of a feature allocation.  We obtain exchangeable feature allocations
simply by virtue of the fact that the feature allocations are independently
and identically distributed given $\bproc$.  The Bernoulli draws from the 
feature frequencies guarantee that the feature allocation is regular.

Before constructing the feature paintbox for such a model, we note
that $V_{k}$ is the total length of the paintbox subset for the feature
indexed by $k$. In this sense, it is the frequency of this feature (hence the name
``feature frequency model''). And $\randlabfa_{k}$ is the uniform random feature label
for the feature with frequency $V_{k}$.
Finally, to achieve the independent Bernoulli draws across $k$ required by the
feature allocation specification, we need for the intersection of any
two paintbox subsets to have length equal to the product of the two
paintbox subset lengths. This desideratum can be achieved with a recursive construction.

First, divide the unit interval into one subset (call it $I_{1}$) of length $V_{1}$
and another subset (call it $I_{0}$) of length $1 - V_{1}$. Then $I_{1}$
is the paintbox subset for the feature indexed by 1.
Recursively, suppose we have paintbox subsets for features indexed $1$ to $K-1$.
Let $e$ be a binary string of length $K-1$.
Suppose that $I_{e}$ is the intersection of (a)
all paintbox subsets for features indexed
by $k$ ($k < K$) where the $k$th digit of $e$ is 1 and (b) all paintbox subset complements
for features indexed by $k$ ($k < K$) where the $k$th digit of $e$ is 0.
For every $e$, we construct $I_{(e,1)}$ to be a subset of $I_{e}$ with
total length equal to $V_{K}$ times the length of $I_{e}$.
We construct $I_{(e,0)}$ to be $I_{e} \backslash I_{(e,1)}$.

\begin{figure}
	\centerline{
		\scalebox{0.5}{
			\input{freq_paint.pstex_t}
		}
	}
\caption{\label{fig:freq_paint} An example feature paintbox for
a feature frequency model (\mysec{freq}). One such model is the
3IBP (\ex{extend_ibp_bp}).
}
\end{figure}

Finally, the paintbox subset for the feature indexed by $K$ is the union of
all $I_{e'}$ with $e'$ a binary string of length $K$ such that the final digit of $e'$
is 1. An example of such a paintbox is illustrated in \fig{freq_paint}.

\begin{example}[Three-parameter Indian buffet process (continued)]
\label{ex:extend_ibp_bp}

We show that the three-parameter Indian buffet process is an example
of a feature frequency model, and thus its feature paintbox can be constructed
according to the general recipe that we have just presented.

The underlying random measure for the three-parameter Indian buffet 
process is known as the \emph{three-parameter beta process} 
\citep{teh:2009:indian,broderick:2012:beta}.
This random measure, denoted $\bproc$, can be constructed
explicitly via the following recursion (with $K_{0} = 0$):
\begin{align*}
	K^{+}_{n} &\sim \pois\left( \bpmass \frac{\Gamma(\bpconc + 1)}{\Gamma(\bpconc + n)}
		\cdot \frac{\Gamma(\bpconc + \bpdisc - 1 + n)}{\Gamma(\bpconc + \bpdisc)} \right), \\
	K_{n} &= K_{n-1} + K_{n}^{+} \\
	V_{k} &\sim \tb(1 - \bpdisc, \bpconc + n + \bpdisc), \quad k = K_{n-1} + 1, \ldots, K_{n} \\
	\randlabfa_{k} &\sim \unif[0,1] \\
	\bproc &= \sum_{k=1}^{\infty} V_{k} \delta_{\randlabfa_{k}},
\end{align*}
where we recall that the $\randlabfa_{k}$ are assumed to be drawn from 
the uniform distribution for simplicity in this paper, but in general
they may be drawn from a continuous distribution that serves as a prior
for the parameters defining a likelihood.

Given $\bproc = \sum_{k=1}^{\infty} V_{k} \delta_{\randlabfa_{k}}$,
the feature allocation is drawn according to the procedure outlined 
for feature frequency models conditioned on the underlying random measure.
Building on work of~\citet{thibaux:2007:hierarchical} in the case
of the IBP, \citet{teh:2009:indian} demonstrate that the distribution 
of the resulting feature allocation is the same as if it were generated 
according to a three-parameter Indian buffet process.
\end{example}

We have seen that the 3IBP can be represented
as a feature frequency model. It is straightforward to observe that the two-feature
model in \exs{two_feat} and \exss{two_feat_paint}
cannot be represented as a feature frequency model unless the intersection 
of the feature subsets has length $p_{11}$ equal to the product 
of the feature subset lengths ($p_{10} + p_{11}$ and $p_{01} + p_{11}$);
i.e., unless $(p_{10} + p_{11}) (p_{01} + p_{11}) = p_{11}$ (cf.\ \fig{two_feat_paint}).
Therefore, we have the following result similar to \prop{efpf}.

\begin{proposition}
\label{prop:freq}
The class of feature frequency models is a strict but non-empty subclass
of the class of exchangeable feature allocations.
\end{proposition}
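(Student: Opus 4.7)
The plan is to handle non-emptiness and strictness separately, reusing two of the examples already developed in the excerpt. First I note that every feature frequency model yields an exchangeable feature allocation (as observed in the paragraph following the definition in \mysec{freq}, since the allocation is iid conditional on $\bproc$), so the claimed inclusion holds, and it remains to supply a witness on each side.

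For non-emptiness, the three-parameter Indian buffet process of \ex{extend_ibp_bp} is by construction a feature frequency model: the three-parameter beta process $\bproc = \sum_k V_k \delta_{\randlabfa_k}$ has an almost surely finite total mass, so the Bernoulli-draw recipe of \mysec{freq} applies, and the resulting feature allocation has the 3IBP distribution by \citet{teh:2009:indian}. This gives one concrete element of the class.

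For strictness, I would use the two-feature allocation of \ex{two_feat}, choosing the parameters so that $(p_{10}+p_{11})(p_{01}+p_{11}) \ne p_{11}$ (as flagged in \fig{venn_summary} and the paragraph preceding the proposition). Exchangeability is immediate because the label sets $\indfaq_n$ are drawn iid. The task is to show this allocation is not a feature frequency model. Suppose for contradiction that $\bproc = \sum_k V_k \delta_{\randlabfa_k}$ realizes it. Since the allocation has almost surely exactly two non-empty features, and any $k$ with $V_k = 0$ contributes an almost surely empty feature, at most two frequencies are positive; call them $V_1, V_2$. Conditional on $(V_1, V_2)$, the category of each index is distributed over $\{00,01,10,11\}$ with probabilities given by independent Bernoullis, and the indices are iid. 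Matching one-index marginals to $(p_{00},p_{01},p_{10},p_{11})$ gives $\mathbb{E}[V_1 V_2]=p_{11}$, $\mathbb{E}[V_1(1-V_2)]=p_{10}$, and so on; matching the two-index event ``both indices in both features,'' which has probability $p_{11}^2$ in the original model and $\mathbb{E}[(V_1 V_2)^2]$ in any feature frequency representation, forces $\mathrm{Var}(V_1 V_2)=0$. The analogous second-moment matches on the other three categories force $V_1(1-V_2)$, $(1-V_1)V_2$, and $(1-V_1)(1-V_2)$ to be almost surely constant as well; adding $V_1 V_2$ and $V_1(1-V_2)$ shows $V_1$ is constant, and similarly $V_2$. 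Hence $p_{11}=V_1 V_2=(p_{10}+p_{11})(p_{01}+p_{11})$, contradicting the parameter choice.

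The main obstacle is the last step: ruling out a non-trivial mixing measure $(V_1,V_2)$. My plan addresses this by matching second moments of the four category indicators between the two models, which pins all four random products to constants and hence determines $V_1,V_2$ themselves; I expect the only subtlety to be a careful statement that auxiliary ``$V_k=0$'' features may be ignored, which follows from Borel--Cantelli applied to the Bernoulli draws with zero success probability.
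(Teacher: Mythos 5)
Your overall route is the same as the paper's: the 3IBP of \ex{extend_ibp_bp} witnesses non-emptiness, and the two-feature allocation of \ex{two_feat} with $(p_{10}+p_{11})(p_{01}+p_{11}) \ne p_{11}$ witnesses strictness. You are in fact more careful than the paper's one-sentence justification, in that you try to rule out \emph{random} frequencies $(V_{1},V_{2})$ rather than only deterministic ones---a real issue, since the definition of a feature frequency model allows the $V_{k}$ to be dependent random variables.

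One step is not licensed as written, though. The matchings $\mathbb{E}[V_{1}(1-V_{2})]=p_{10}$ and $\mathbb{E}[(1-V_{1})V_{2}]=p_{01}$ (and the corresponding separate second-moment identities for those two categories) presuppose a correspondence between the frequency model's atom indices and the labels $1,2$ of \ex{two_feat}; but the object being matched is an \emph{unlabeled} feature allocation, so only functionals symmetric in the two features are identifiable, e.g.\ $\mathbb{E}[V_{1}(1-V_{2})+(1-V_{1})V_{2}]=p_{10}+p_{01}$. Your zero-variance trick does legitimately give $V_{1}V_{2}\stackrel{a.s.}{=}p_{11}$ and $(1-V_{1})(1-V_{2})\stackrel{a.s.}{=}p_{00}$ from the symmetric events ``index $1$ (resp.\ both indices $1$ and $2$) lies in two features / in no feature,'' hence the unordered pair $\{V_{1},V_{2}\}$ is deterministic; but you cannot then conclude $V_{1}=q_{1}$ and $V_{2}=q_{2}$ by splitting $V_{1}=V_{1}V_{2}+V_{1}(1-V_{2})$ with $\mathbb{E}[V_{1}(1-V_{2})]=p_{10}$. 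A clean patch: in \emph{any} feature frequency model one has $\mathbb{P}(\randfa_{2}=\{\{1\},\{2\}\})=2\,\mathbb{P}(\randfa_{2}=\{\{1\},\{1\}\})$, because both probabilities are expectations of the same function of the frequencies up to the factor $2$ counting the two assignments of the singletons to atoms; whereas \ex{two_feat} assigns these allocations probabilities $2p_{10}p_{01}$ and $p_{11}p_{00}$, and $p_{10}p_{01}=p_{11}p_{00}$ is equivalent (by the algebra in the proof of \thm{two_labeled_freq}) to $(p_{10}+p_{11})(p_{01}+p_{11})=p_{11}$. This single comparison finishes the strictness claim and makes the moment machinery optional.
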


In proving \props{freq} and \propss{efpf}, we used the 3IBP 
as an example that belongs to both the class of feature models with EFPFs
and the class of feature frequency models. Moreover, in both cases we used two-feature
models as an example of exchangeable feature models that do not belong 
to these subclasses; in particular, we used two-feature models in which 
the feature combination probabilities $p_{10}, p_{01}, p_{11}, p_{00}$
are not in the necessary proportions. These observations suggest
that feature frequency models and EFPFs may be linked. We flesh out the relationship
between the two representations in the next few results. 

We start with
\emph{a priori labeled} features.
Recall from \mysec{label} that an a priori labeled feature allocation is to a feature allocation
what a classification is to a clustering; that is, the feature labels are known in advance.
The case where we know the feature
order in advance is somewhat easier and gives intuition for the type of result
we would like in the true feature allocation case.
In particular, we prove the results for the case of two a priori labeled features in
\thm{two_labeled_freq} and then the case of an unbounded number of a priori
labeled features in \thm{any_labeled_freq}.

From there, we move on to the (a priori) unlabeled case that is the focus of the paper
and prove the equivalence of EFPFs and a slight extension of feature frequency models in 
\thm{efpf_freq}.

\begin{theorem} \label{thm:two_labeled_freq}
Consider a model with two a priori labeled features: feature 1 and feature 2.
If the two features are generated from
labeled feature frequencies, the probability of an a priori labeled feature allocation
of $[N]$
with $M_{N,1}$ occurrences of feature 1 and $M_{N,2}$ occurrences
of feature 2 takes the form $\oefpf(N; M_{N,1}, M_{N,2})$, where we make no 
symmetry assumptions about $\oefpf$ here and also
allow any of $M_{N,1}$ and $M_{N,2}$ to be zero. Conversely, if the probability of any
a priori labeled feature allocation can be written as $\oefpf(N; M_{N,1}, M_{N,2})$, then
the feature allocation has the same distribution as if it were generated
from labeled feature frequencies.
\end{theorem}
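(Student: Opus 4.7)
My plan is to treat the forward direction by direct computation and the converse via de Finetti plus an extra moment identity that forces the mixing measure to factor. For the forward direction, I condition on the joint distribution of the two labeled frequencies $(V_{1},V_{2})$. By construction the indicators $\indpart_{n,k}$ are conditionally independent Bernoulli$(V_{k})$ draws across $n\in[N]$ and $k\in\{1,2\}$, so the probability of any specific a priori labeled allocation with column sums $M_{N,1}$ and $M_{N,2}$ equals
$$
\mathbb{E}\bigl[V_{1}^{M_{N,1}}(1-V_{1})^{N-M_{N,1}}V_{2}^{M_{N,2}}(1-V_{2})^{N-M_{N,2}}\bigr],
$$
which depends on the configuration only through $(N,M_{N,1},M_{N,2})$ and yields the desired $\oefpf$.

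For the converse, set $Z_{n} := (\indpart_{n,1},\indpart_{n,2})$. Because the hypothesized probability depends only on column sums, it is in particular invariant under row permutations, so $(Z_{n})_{n\ge 1}$ is an exchangeable $\{0,1\}^{2}$-valued sequence; consistency across $N$ furnishes the infinite exchangeable sequence required. De Finetti's theorem then produces a random probability measure $\mu$ on $\{0,1\}^{2}$ under which the $Z_{n}$ are conditionally iid with law $\mu$. Writing $\mu_{ij} := \mu(\{(i,j)\})$, the labeled feature frequency model with frequencies $V_{1} := \mu_{11}+\mu_{10}$ and $V_{2} := \mu_{11}+\mu_{01}$ is recovered exactly when $\mu$ is almost surely the product of its two marginals, equivalently when $\mu_{11}\mu_{00}=\mu_{10}\mu_{01}$ almost surely.

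To obtain this independence identity, I parametrize the a priori labeled allocations with fixed column sums $(m_{1},m_{2})$ by $j$, the number of rows of type $(1,1)$, so that the remaining row-type counts are $m_{1}-j$, $m_{2}-j$, and $N-m_{1}-m_{2}+j$. Each such allocation has probability $\mathbb{E}[\mu_{11}^{j}\mu_{10}^{m_{1}-j}\mu_{01}^{m_{2}-j}\mu_{00}^{N-m_{1}-m_{2}+j}]$ by the conditional iid structure, but by hypothesis they all share the common value $\oefpf(N;m_{1},m_{2})$. Equating the expressions for $j$ and $j+1$ and rearranging yields, for every quadruple of non-negative integers $(j,a,b,c)$ (which one obtains by varying $N$, $m_{1}$, $m_{2}$ appropriately, using in particular that $m_{1}$ or $m_{2}$ is allowed to be zero at the boundary),
$$
\mathbb{E}\bigl[\mu_{11}^{j}\mu_{10}^{a}\mu_{01}^{b}\mu_{00}^{c}\,(\mu_{11}\mu_{00}-\mu_{10}\mu_{01})\bigr] = 0.
$$
Since $\mu$ is supported on the compact simplex in $[0,1]^{4}$, Stone--Weierstrass promotes this family of monomial identities to $\mathbb{E}[g(\mu)(\mu_{11}\mu_{00}-\mu_{10}\mu_{01})] = 0$ for every continuous $g$, which forces $\mu_{11}\mu_{00}=\mu_{10}\mu_{01}$ almost surely.

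The main obstacle I anticipate is the moment step: enumerating a priori labeled allocations with fixed column sums by joint row-type counts, using the hypothesis to equate the relevant polynomial moments of $\mu$, and confirming that the Stone--Weierstrass approximation on the compact simplex is strong enough to promote the monomial equalities to the required almost-sure identity. Once $\mu$ is known to factor, reading off $V_{1}$ and $V_{2}$ as its marginals and recognizing the feature frequency model in the conditional product-Bernoulli structure is routine.
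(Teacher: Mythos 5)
Your proof is correct and takes essentially the same route as the paper: the forward direction is the identical conditional-Bernoulli computation, and the converse reduces to the same odds-ratio identity $\mu_{11}\mu_{00}=\mu_{10}\mu_{01}$ a.s.\ obtained by comparing probabilities of configurations sharing the same column sums. The Stone--Weierstrass step is unnecessary overkill: your monomial identities for the quadruples $(j,a,b,c)=(1,0,0,1)$ and $(0,1,1,0)$ already combine to give $\mathbb{E}\bigl[(\mu_{11}\mu_{00}-\mu_{10}\mu_{01})^{2}\bigr]=0$, which is precisely the paper's $N=4$ computation.
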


\begin{proof}
Note that throughout this proof we consider the probability of
\emph{a particular} labeled feature allocation
of $[N]$ with $M_{N,1}$ occurrences of feature 1 and $M_{N,2}$ occurrences of feature 2,
as distinct from the probability of all labeled feature allocations
of $[N]$ with $M_{N,1}$ occurrences of feature 1 and $M_{N,2}$
occurrences of feature 2. The latter, which is
not addressed here, would be the sum over instances of the former. In particular,
recalling the matrix representation from \mysec{label}, there are 
$$
	\binom{N}{M_{N,1}} \binom{N}{M_{N,2}}
$$
possible $N \times 2$ matrices with $M_{N,1}$ ones in the first column
and $M_{N,2}$ ones in the second column.

The reader may feel there is some similarity in this setup to the
two-feature allocation of
\exs{two_feat} and \exss{two_feat_paint}.
We note that the quantities $p_{10}, p_{01}, p_{11}, p_{00}$---which
retain essentially the same meaning as in \fig{two_feat_paint}---may now be random
and that their order is pre-specified and non-random.

First, we calculate the probability of a certain labeled feature configuration
under this model. Let $M'_{n,10}$ be the number of indices in $[n]$
with feature 1 but not feature 2. Let $M'_{n, 01}$ be the number of indices in 
$[n]$ with feature 2 but not feature 1. Let $M'_{n,00}$ count the indices
with neither feature, and let $M'_{n,11}$ count the indices with both features.
Then
\begin{align}
	\label{eq:two_feat_label_prob}
	\mbp(\hat{F}_{N,1} = \hat{f}_{N,1}, \hat{F}_{N,2} = \hat{f}_{N,2})
		&= \mbe( p_{10}^{M'_{N,10}} p_{01}^{M'_{N,01}} p_{11}^{M'_{N,11}} p_{00}^{M'_{N,00}}).
\end{align}

Denote the total
probabilities of features 1 and 2 as, respectively, $q_{1} = p_{10} + p_{11}$
and $q_{2} = p_{01} + p_{11}$. Suppose that we have a feature frequency model. 
This assumption implies that
\begin{equation}
	\label{eq:two_freq_fact}
	p_{10} \eqas q_{1} (1-q_{2}), \quad p_{01} \eqas (1-q_{1}) q_{2},
	\quad p_{11} \eqas q_{1} q_{2}, \quad p_{00} \eqas (1-q_{1}) (1-q_{2}),
\end{equation}
where any one of the equalities in \eq{two_freq_fact} implies the others.
It follows that
\begin{align}
	\label{eq:two_feat_label_efpf}
	\mbp(\hat{F}_{N,1} = \hat{f}_{N,1}, \hat{F}_{N,2} = \hat{f}_{N,2})
		&= \mbe[ q_{1}^{M_{N,1}} (1-q_{1})^{N-M_{N,1}} q_{2}^{M_{N,2}} (1-q_{2})^{N - M_{N,2}} ],
\end{align}
where $M_{n,1} = M'_{n,10} + M'_{n,11}$ is the total number of 
indices with feature 1, and likewise $M_{n,2} = M'_{n,01} + M'_{n,11}$ is
the total number of indices with feature 2.

So we see that making a feature frequency model assumption yields a
feature allocation probability in \eq{two_feat_label_efpf} that depends
only on $N, M_{N,1}, M_{N,2}$. Since we retain the known labeling in this example,
the probability is 
not symmetric in $M_{N,1}$ and $M_{N,2}$.

In the other direction, suppose we know that
\begin{align}
	\label{eq:label_efpf_assume}
	\mbp(\hat{F}_{N,1} = \hat{f}_{N,1}, \hat{F}_{N,2} = \hat{f}_{N,2})
		&= \oefpf(N, M_{N,1}, M_{N,2})
\end{align}
for some function $\oefpf$. Again, we make no symmetry assumptions about $\oefpf$ here,
and any of $M_{N,1}$ and $M_{N,2}$ may be zero.
Then frequencies $p_{10}, p_{01}, p_{11}, p_{00}$ must exist by the law of large numbers;
we note they may be random.

The assumption in \eq{label_efpf_assume} implies that
the configurations
\begin{align*}
	(M'_{4,10},M'_{4,01},M'_{4,00},M'_{4,11})
		&= (2,2,0,0) \\
	(M'_{4,10},M'_{4,01},M'_{4,00},M'_{4,11})
		&= (0,0,2,2) \\
	(M'_{4,10},M'_{4,01},M'_{4,00},M'_{4,11})
		&= (1,1,1,1)
\end{align*}
have the
same probability. That is, by \eq{two_feat_label_prob},
\begin{align*}
	\mbe[ p_{10}^{2} p_{01}^{2} ]
		= \mbe[ p_{11}^{2} p_{00}^{2} ]
		= \mbe[ p_{10} p_{01} p_{11} p_{00} ].
\end{align*}
It follows that
\begin{align*}
	\mbe[ (p_{10} p_{01} - p_{11} p_{00})^{2} ]
		= \mbe[ p_{10}^{2} p_{01}^{2} + p_{11}^{2} p_{00}^{2} - 2  p_{10} p_{01} p_{11} p_{00} ]
		= 0.
\end{align*}
So it must be that $p_{10} p_{01} \stackrel{a.s.}{=} p_{11} p_{00}$. 
Recall that this condition is familiar from \ex{two_feat}.

Adding $p_{10} p_{11}$ to both sides of the almost sure equality
and then further adding
$p_{11} (p_{01} + p_{11})$ to both sides yields
$$
	(p_{10} + p_{11}) (p_{01} + p_{11}) \eqas p_{11} (p_{10} + p_{01} + p_{11} + p_{00}),
$$
which reduces to
$$
	q_{1} q_{2} \eqas p_{11}
$$
from the definitions of $q_{1}$ and $q_{2}$ and from the fact that
$p_{10} + p_{01} + p_{11} + p_{00} = 1$.

By \eq{two_freq_fact} and surrounding text,
we see that \eq{label_efpf_assume} implies our model is a
feature frequency model.
Thus, the equivalence between models with a priori labeled EFPFs and
a priori labeled feature frequency models
in the case of two features results from simple algebraic manipulations.
\end{proof}

Extending the argument above becomes more tedious
when more than two features are involved. In the case of multiple, or
even countably many, labeled features, a more elegant proof exists.

\begin{theorem} \label{thm:any_labeled_freq}
Consider a model with features a priori labeled $1,2,3,\ldots$.
If the features are generated from
labeled feature frequencies, the probability of an a priori labeled feature allocation
of $[N]$ with $K$ or fewer features and $M_{N,k}$ occurrences of feature $k$ for $k \in \{1,\ldots,K\}$
takes the form $\oefpf(N; M_{N,1}, \ldots, M_{N,K})$, where we make no 
symmetry assumptions about $\oefpf$ here and note that
any of $M_{N,1}, \ldots, M_{N,K}$ may be zero. Call $\oefpf$ a \emph{labeled EFPF}. Conversely, if the probability of any a priori
labeled feature allocation can be written as $\oefpf(N; M_{N,1}, \ldots, M_{N,K})$, then
the feature allocation has the same distribution as if it were generated
from labeled feature frequencies.
\end{theorem}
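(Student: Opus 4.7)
The forward direction is a direct generalization of the calculation leading to \eq{two_feat_label_efpf}. Under a labeled feature frequency model with random frequencies $(q_{k})_{k=1}^{K}$, the matrix entries $X_{n,k}$ are, conditional on $(q_{k})$, mutually independent with $X_{n,k}\sim\bern(q_{k})$, so
$$
\mbp(\hat{F}_{N,1}=\hat{f}_{N,1},\ldots,\hat{F}_{N,K}=\hat{f}_{N,K})=\mbe\!\left[\prod_{k=1}^{K}q_{k}^{M_{N,k}}(1-q_{k})^{N-M_{N,k}}\right],
$$
which is a function of $(N;M_{N,1},\ldots,M_{N,K})$ alone.

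For the converse, suppose the labeled feature allocation probability has the form $\oefpf(N;M_{N,1},\ldots,M_{N,K})$. This form is equivalent to saying the joint law of the entries is invariant under row permutations applied \emph{independently within each column}---strictly stronger than joint row exchangeability. In particular, for each $k$ the binary sequence $(X_{n,k})_{n\ge 1}$ is exchangeable, so by de Finetti and the strong law there is a random $q_{k}\in[0,1]$ with $q_{k}\eqas\lim_{N}M_{N,k}/N$ such that, conditional on $q_{k}$, column $k$ is iid $\bern(q_{k})$. The problem then reduces to showing the columns are \emph{jointly} independent given $(q_{k})_{k}$, for then the feature allocation is generated by a labeled feature frequency model with frequencies $(q_{k})$.

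For the joint factorization, apply de Finetti to the exchangeable sequence of rows (viewed as $\{0,1\}^{K}$-valued) to obtain a random row-directing probability measure $\alpha$ on $\{0,1\}^{K}$; it suffices to show $\alpha\eqas\bigotimes_{k=1}^{K}\bern(q_{k})$. The key device is a column-swap that generalizes the algebraic manipulation in the two-feature proof of \thm{two_labeled_freq}: for any pair of row patterns $\epsilon,\epsilon'\in\{0,1\}^{K}$ and any coordinate $k$, exchanging their $k$th entries preserves every column sum, so the EFPF hypothesis forces
$$
\mbe[\alpha(\epsilon)\,\alpha(\epsilon')]=\mbe[\alpha(\tilde{\epsilon})\,\alpha(\tilde{\epsilon}')]
$$
for the swapped pair $\tilde{\epsilon},\tilde{\epsilon}'$. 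Iterating across coordinates and extending to $N$-row matrices forces every joint moment of $\alpha$ to factor, identifying $\alpha$ as a product measure a.s.; combined with the SLLN identification of the marginals, this gives $\alpha\eqas\bigotimes_{k}\bern(q_{k})$.

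The main obstacle is upgrading the two-row pairwise swap identity to the full factorization of $\alpha$, especially when $K=\infty$. For finite $K$ this is a moment-matching argument in the spirit of the two-feature case but organized through $\alpha$ instead of through the probabilities $p_{\epsilon}$ directly. For the countable case, the plan is to truncate to any finite $K'\le K$ (the marginalized EFPF on the first $K'$ columns still satisfies the EFPF hypothesis by consistency), apply the finite-$K'$ factorization, and then invoke Kolmogorov consistency on the resulting finite-dimensional products $\bigotimes_{k=1}^{K'}\bern(q_{k})$ to recover $\alpha$ as $\bigotimes_{k=1}^{\infty}\bern(q_{k})$.
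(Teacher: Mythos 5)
Your forward direction is fine (it matches the paper's, apart from omitting the tail factor $\prod_{k=K+1}^{\infty}(1-V_{k})^{N}$ accounting for the features beyond the first $K$ remaining empty; since that factor depends only on $N$, the conclusion is unaffected). The architecture of your converse is also the right one: identify $q_{k}$ as the a.s.\ limit of $M_{N,k}/N$, and reduce everything to showing that the row-directing measure $\alpha$ is a.s.\ the product $\bigotimes_{k}\bern(q_{k})$. The gap is that the step which actually does this work is asserted rather than proved. The two-row swap identity $\mbe[\alpha(\epsilon)\alpha(\epsilon')]=\mbe[\alpha(\tilde{\epsilon})\alpha(\tilde{\epsilon}')]$ is correct but far too weak on its own: it equates certain second moments and yields no almost-sure statement about $\alpha$. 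Even in the two-feature case the paper needs a \emph{four}-row comparison, arranged so that the three configuration probabilities assemble into $\mbe[(p_{10}p_{01}-p_{11}p_{00})^{2}]=0$; it is the sum-of-squares structure, not the swap identity itself, that delivers the a.s.\ conclusion. For general $K$ you would need to exhibit, for every pattern $\epsilon$, a linear combination of configuration probabilities (all with matched column sums) equal to $\mbe[(\alpha(\epsilon)-\prod_{k}q_{k}^{\epsilon_{k}}(1-q_{k})^{1-\epsilon_{k}})^{2}]$, where each $q_{k}^{\epsilon_{k}}(1-q_{k})^{1-\epsilon_{k}}$ must itself be expanded as $\sum_{\delta:\delta_{k}=\epsilon_{k}}\alpha(\delta)$, producing $2K$-row moments. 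Organizing this so that the EFPF identities cancel everything is exactly the combinatorial bookkeeping the paper describes as ``tedious'' and deliberately avoids; ``iterating across coordinates\ldots forces every joint moment of $\alpha$ to factor'' names the destination without supplying the route, and you flag it yourself as the main obstacle. As written, the converse is a plan, not a proof.

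For contrast, the paper's proof sidesteps the moment combinatorics entirely. It conditions on the exchangeable $\sigma$-field $\xi_{N}$ of the first $N$ indices; the EFPF hypothesis makes the column sums $(M_{N,1},\ldots,M_{N,K})$ sufficient, which gives directly that
$$
	\mbp(Z_{1,k}=b_{k}\mid Z_{1,1}=b_{1},\ldots,Z_{1,k-1}=b_{k-1},\xi_{N})=\mbp(Z_{1,k}=b_{k}\mid \xi_{N})=\frac{1}{N}\sum_{n=1}^{N}\mbo\{Z_{n,k}=b_{k}\},
$$
so the conditional law of a row given $\xi_{N}$ factorizes exactly for each finite $N$; reverse-martingale convergence along the reversed filtration $(\xi_{N})$ then passes the factorization to $\xi_{\infty}$, where the limits $V_{k}$ live, and the tower property finishes. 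If you want to salvage your moment route, you would either need to carry out the sum-of-squares construction explicitly or replace it with this sufficiency-plus-reverse-martingale argument.
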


\begin{proof}
First, consider the claim that every labeled feature frequency model has a labeled EFPF.
This claim is intuitively clear since the independent Bernoulli draws 
at each atom of the (potentially random) measure
$\bproc = \sum_{k=1}^{\infty} V_{k} \delta_{\randlabfa_{k}}$
result in a probability that depends only on the number of occurrences
of the corresponding feature and not any interactions between features.

To show this direction formally, we consider a fixed, labeled feature allocation
$\labdetfa_{N} = (\blockfa_{N,1},\blockfa_{N,2},\ldots,\blockfa_{N,K})$
with $M_{N,k} := |\blockfa_{N,k}|$
and note that 
\begin{align*}
	\lefteqn{ \mbp(\labrandfa_{N} = \labdetfa_{N}) } \\
		&= \mbe\left[ \mbp(\labrandfa_{N} = \labdetfa_{N} | \bproc ) \right] \\
		&= \mbe\left[ \left( \prod_{k=1}^{K} V_{k}^{M_{N,k}} (1-V_{k})^{N-M_{N,k}} \right) \cdot \left( \prod_{k=K+1}^{\infty} (1-V_{k})^{N} \right) \right].
\end{align*}
It follows that $\mbp(\labrandfa_{N} = \labdetfa_{N})$ has $\oefpf$ form.

Now consider the other direction. We start with a labeled feature allocation $\randfa_{\infty}$. 
In this case, we know that for every labeled feature allocation of $[N]$,
$$
	\labdetfa_{N} = (\blockfa_{N,1},\ldots,\blockfa_{N,K}),
$$
we have that
a function $\oefpf$ exists in the form
\begin{equation}
	\label{eq:order_K_efpf_assume}
	\mbp(\labrandfa_{N} = \labdetfa_{N}) = \oefpf(N,|\blockfa_{N,1}|, \ldots, |\blockfa_{N,K}|),
\end{equation}
with no additional symmetry assumptions for $\oefpf$ and where the block sizes
$M_{N,k} = |\blockfa_{N,k}|$ may be zero.

Let $Z_{n,k}$ be one if $n$ belongs to the $k$th feature (i.e., $n \in \blockfa_{N,k}$) or zero
otherwise. Let $b_{1}, \ldots, b_{k}$ be values in $\{0,1\}$.
Our goal is to show that conditional on some (as yet unknown)
labeled feature frequencies, the probability of feature presence factorizes
as independent Bernoulli draws:
\begin{equation}
	\label{eq:label_factor}
	\mbp(Z_{1,1}=b_{1},\ldots,Z_{1,K}=b_{K} | V_{1}, \ldots, V_{K})
		= \prod_{k=1}^{K} V_{k}^{b_{k}} (1-V_{k})^{1-b_{k}}.
\end{equation}

By the assumption on $\oefpf$, the labeled feature sizes $M_{N,1},\ldots,M_{N,K}$
are sufficient for the distribution of the labeled feature allocation. So we start by considering
\begin{align}
	\nonumber
	\lefteqn{ \mbp(Z_{1,1}=b_{1},\ldots,Z_{1,K}=b_{K} | M_{N,1}, \ldots, M_{N,K}) } \\
		\label{eq:ordered_features_factorize}
		&= \prod_{k=1}^{K} \mbp(Z_{1,k} = b_{k} | Z_{1,1}=b_{1},\ldots,Z_{1,k-1} = b_{k-1}, M_{N,1}, \ldots, M_{N,K})
\end{align}
Let $\xi_{N}$ be the sigma-field of events invariant under permutations of the first
$N$ indices.
Then again since the feature sizes are sufficient for the feature allocation distribution, we have
\begin{align}
	\nonumber
	\lefteqn{ \mbp(Z_{1,k} = b_{k} | Z_{1,1}=b_{1},\ldots,Z_{1,k-1} = b_{k-1}, M_{N,1}, \ldots, M_{N,K}) } \\
		\label{eq:conditional_exch_sigma_field_sizes}
		&= \mbp(Z_{1,k} = b_{k} | Z_{1,1}=b_{1},\ldots,Z_{1,k-1} = b_{k-1}, \xi_{N}) \\
		\nonumber
		&= \mbp(Z_{1,k} = b_{k} | \xi_{N}) \\
		\nonumber
		&= \frac{1}{N} \sum_{n=1}^{N} \mbp(Z_{n,k} = b_{k} | \xi_{N}) \\
		\nonumber
		&= \mbe\left[ \frac{1}{N} \sum_{n=1}^{N} \mbo\{Z_{n,k} = b_{k}\} | \xi_{N} \right] \\
		\nonumber
		&= \frac{1}{N} \sum_{n=1}^{N} \mbo\{Z_{n,k} = b_{k}\}.
\end{align}
The last line follows since the sum is measurable in $\xi_{N}$.
By the strong law of large numbers, the final sum converges almost surely as
$N \rightarrow \infty$ to some 
potentially random value in $[0,1]$; call it $V_{k}$ if $b_{k}=1$. 
By \eq{ordered_features_factorize}, then, we have
\begin{equation}
	\label{eq:label_cond_sizes_conv_freqs}
	\mbp(Z_{1,1}=b_{1},\ldots,Z_{1,K}=b_{K} | M_{N,1}, \ldots, M_{N,K})
		\convas \prod_{k=1}^{K} V_{k}^{b_{k}} (1-V_{k})^{1-b_{k}}
\end{equation}

On the other hand, \eqs{conditional_exch_sigma_field_sizes} and \eqss{ordered_features_factorize}
imply that 
\begin{align*}
	\lefteqn{\mbp(Z_{1,1}=b_{1},\ldots,Z_{1,K}=b_{K} | M_{N,1}, \ldots, M_{N,K})} \\
		&= \mbp(Z_{1,1}=b_{1},\ldots,Z_{1,K}=b_{K} | \xi_{N}).
\end{align*}
We next observe that the righthand side of the above equality is a reverse martingale.
 $(\xi_{N})$
is a reversed filtration since $\xi_{N} \supseteq \xi_{N+1}$ for all $N$. Moreover,
(1) $\mbp(Z_{1,1}=b_{1},\ldots,Z_{1,K}=b_{K} | \xi_{N})$ is measurable with respect to
$\xi_{N}$; (2) the same quantity is integrable; and (3) by the tower law,
$$
	\mbp(Z_{1,1}=b_{1},\ldots,Z_{1,K}=b_{K} | \xi_{N}) | \xi_{N+1}
		= \mbp(Z_{1,1}=b_{1},\ldots,Z_{1,K}=b_{K} | \xi_{N+1}).
$$

Since $\mbp(Z_{1,1}=b_{1},\ldots,Z_{1,K}=b_{K} | \xi_{N})$ is a reverse martingale, we have
that
$$
	\mbp(Z_{1,1}=b_{1},\ldots,Z_{1,K}=b_{K} | \xi_{N})
		\convas \mbp(Z_{1,1}=b_{1},\ldots,Z_{1,K}=b_{K} | \xi_{\infty})
$$
for $\xi_{\infty} = \bigcap_{n=1}^{\infty} \xi_{n}$ by reverse martingale convergence.
Together with \eq{label_cond_sizes_conv_freqs}, this convergence implies that
$$
	\mbp(Z_{1,1}=b_{1},\ldots,Z_{1,K}=b_{K} | \xi_{\infty})
		= \prod_{k=1}^{K} V_{k}^{b_{k}} (1-V_{k})^{1-b_{k}},
$$
and since the $V_{k}$ are measurable with respect to $\xi_{\infty}$, the 
tower law yields \eq{label_factor}, as was to be shown.
\end{proof}

While illustrative, the two previous results do not directly deal with feature allocations
as defined earlier in this paper; namely, they do not show any equivalence between EFPFs
and feature frequency models in the case where the features are unlabeled (which is exactly the case
where EFPFs are defined). We will show
in the unlabeled case that every feature frequency model has an EFPF and that every regular feature allocation
with an EFPF is an feature frequency model. In fact, we can consider a general---i.e., not necessarily regular---feature allocation and characterize the EFPF representation in this case.

\begin{theorem}
\label{thm:efpf_freq}
Let $\lambda$ be a non-negative random variable (which may have some arbitrary joint law with the feature frequencies in a feature frequency model). We can obtain an exchangeable feature allocation by generating a feature allocation from a feature frequency model and then, for each index $n$, including an independent $\pois(\lambda)$-distributed number of features of the form $\{n\}$ in addition to those features previously generated (which may also include index $n$). A feature allocation of this type has an EFPF. Conversely, every feature allocation with an EFPF has the same distribution as one generated by this construction for some joint distribution of $\lambda$ and the feature frequencies.
\end{theorem}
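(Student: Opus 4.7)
My plan is to handle the two directions of the equivalence separately.

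\textbf{Forward direction.} I would fix an ordered feature allocation $\labdetfa_N = (A_1, \ldots, A_K)$ and compute $\mbp(\labrandfa_N = \labdetfa_N)$ by conditioning on $\bproc = \sum_{k} V_k \delta_{\randlabfa_k}$ and on $\lambda$. By \thm{any_labeled_freq} applied to the frequency model (treated as a priori labeled), the joint probability of any labeled sub-collection of features coming from $\bproc$ factors as $\prod_k V_k^{M_k}(1-V_k)^{N-M_k}$; independently, the Poisson augmentation contributes, for each index $n$, a factor $e^{-\lambda}\lambda^{s_n}/s_n!$ when exactly $s_n$ of the size-one features in $\labdetfa_N$ are attributed to the Poisson process at index $n$. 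I would then sum over all ways to attribute each size-one feature in $\labdetfa_N$ to either source and combine with the ordering combinatorial factor $\binom{K}{\ok_1, \ldots, \ok_H}^{-1}$ from \eq{order_mult}. The $s_n!$ denominators in the Poisson density combine with this multinomial factor in such a way that the result depends only on $N$ and the multiset of sizes $(|A_1|,\ldots,|A_K|)$, giving an EFPF as in \eq{efpf}.

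\textbf{Converse direction.} Given an EFPF, I would invoke \lem{unif_rand_exch} so that $(\indfar_n)$ is exchangeable, and apply de Finetti's theorem to obtain a directing random measure $\alpha$. Each feature $F$ has an almost-sure limiting frequency $V := \lim_{N \to \infty} |F \cap [N]|/N$ by the strong law of large numbers; I would then split the features into those with $V > 0$ and those that are singletons with $V = 0$. For the $V>0$ part, I would adapt the reverse-martingale argument from the proof of \thm{any_labeled_freq} to the unlabeled setting: conditioning on the tail $\sigma$-field $\xi_\infty$, using the EFPF to reduce all conditioning to feature sizes, and letting $N \to \infty$ to identify the limiting empirical frequencies with the $V_k$. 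This shows that, given $\xi_\infty$, each index $n$ independently belongs to feature $k$ with probability $V_k$, which is exactly the definition of a feature frequency model.

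\textbf{The hardest step.} The crux is identifying the distribution of the extra singletons per index as conditionally Poisson. By exchangeability, the per-index singleton counts $(X_n)$ are i.i.d.\ given their de Finetti directing measure $\mu$, so the unordered singleton configuration has probability $\prod_n \mu(x_n)$. Converting to the ordered allocation via the factor $\prod_n x_n!/K_s!$ with $K_s = \sum_n x_n$, and enforcing that the EFPF depends only on $N$ and $K_s$, yields the functional identity $\mu(x)\,x! = c \cdot r^x$, which identifies $\mu$ as $\pois(r)$; the random rate $r$ is the required $\lambda$, and since both $(V_k)$ and $\lambda$ are measurable with respect to $\xi_\infty$ they can have an arbitrary joint law. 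The main technical obstacle I anticipate is cleanly separating the singleton contribution from the frequency-model contribution, since a size-one feature in the observed allocation may come from either source; I would address this by first conditioning on $\bproc$ and applying the Poisson characterization to the residual singletons not accounted for by the frequency model.
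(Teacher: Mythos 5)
Your forward direction is essentially the paper's: condition on $\bproc$ and $\lambda$, sum over the ways of attributing each size-one feature to the frequency model or to the Poisson component, and absorb the ordering factor from \eq{order_mult}; the only cosmetic difference is that the paper aggregates the per-index Poisson draws into a single $\pois(N\lambda)$ count scattered uniformly over $[N]$, which makes the bookkeeping slightly cleaner. (Invoking \thm{any_labeled_freq} for the factorization is unnecessary --- conditional on $\bproc$ the Bernoulli independence is the definition of a feature frequency model --- but harmless.)

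The converse is where you diverge from the paper, and where there is a genuine gap. The paper works with the single statistic $\hat{\#}$, the number of features containing index $1$: the EFPF implies that, given $\xi_{N}$, $\hat{\#}$ is a sum of independent $\bern(M_{N,k}/N)$ variables; reverse-martingale convergence plus \lem{seq_bin} then deliver the limit law $\pois(\lambda)$ plus independent $\bern(p_{k})$'s in one stroke, with the separation into ``persistent frequency'' and ``dust'' components performed \emph{inside} the limit theorem for Poisson-binomial sums. Your plan instead decomposes the allocation itself into positive-frequency features and singletons before identifying either law, and that decomposition is not available at the point where you need it. Concretely: (i) you assume that every feature with vanishing limiting frequency is a singleton $\{n\}$, but ruling out, say, exchangeable doubleton dust is part of what the theorem asserts and must itself be extracted from the EFPF --- it is not a consequence of the SLLN; (ii) your functional-equation step $\mu(x)\,x!=c\,r^{x}$ treats the per-index singleton counts as observable, yet at any finite $N$ a size-one feature may be a nascent frequency-model feature, so the ``residual singletons not accounted for by the frequency model'' cannot be read off the allocation, and conditioning on $\bproc$ to isolate them is circular since $\bproc$ is only constructed after the frequency-model structure has been established; (iii) for random $\mu$ the identity $\prod_{n}\mu(x_{n})x_{n}!$ depending only on $\sum_{n}x_{n}$ holds only inside an expectation, and upgrading it to an almost-sure statement needs a second-moment argument in the spirit of the $\mbe[(p_{10}p_{01}-p_{11}p_{00})^{2}]=0$ computation of \thm{two_labeled_freq}, carried out for countably many constraints simultaneously. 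These are precisely the difficulties that the paper's route through $\hat{\#}$ and \lem{seq_bin} is designed to avoid, so as written your converse does not close.
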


\begin{proof}
Suppose a feature allocation $\labdetfa$ is generated as described by the construction in \thm{efpf_freq}
with (potentially random) measure $\bproc = \sum_{k=1}^{\infty} V_{k} \delta_{\randlabfa_{k}}$
giving the frequencies in the feature frequency model component. We wish to show that the feature allocation
has an EFPF.
We will make use of the fact that
an equivalent way to generate the Poisson component of the feature allocation is to draw
$
	\pois\left( N \lambda \right)
$
singletons and then assign each uniformly at random to an index in $[N]$.

Consider $\labdetfa_{N} = (\blockfa_{1},\blockfa_{2},\ldots,\blockfa_{K})$.
Let $S = \{k: |\blockfa_{k}| = 1\}$ represent the feature indices of the singletons of the feature allocation.
These features may have been generated either from the feature frequency model or
from the Poisson component. To find the probability of the feature allocation,
we consider each possible association of singletons to
one of these components. For any such association, let $\tilde{S}$ represent
those singletons assigned to the Poisson component; that is, $\tilde{S} \subseteq S$.
Let $\tilde{K} = K - |\tilde{S}|$
represent the number of remaining features, which we denote by
$$
	(\tilde{\blockfa}_{1}, \ldots, \tilde{\blockfa}_{\tilde{K}}).
$$
Then the probability of this feature allocation satisfies
\begin{align*}
	\lefteqn{ \mbp(\labrandfa_{N} = \labdetfa_{N}) } \\
		&= \mbe\left[ \mbp(\labrandfa_{N} = \labdetfa_{N} | \bproc, \lambda ) \right] \\
		&= \mbe\left[ \sum_{
			\tilde{S}: \tilde{S} \subseteq S
			}
			N^{-\tilde{S}} \pois\left( \tilde{S} | N \lambda \right)
			\sum_{\stackrel{(i_{1},\ldots,i_{\tilde{K}})}{ \textrm{distinct}}}
			\right. \\
		& \left. 			
				\frac{1}{K!} \left(
					V_{i_{1}}^{|\tilde{\blockfa}_{1}|} (1-V_{i_{1}})^{N-|\tilde{\blockfa}_{1}|} \cdots V_{i_{\tilde{K}}}^{|\tilde{\blockfa}_{\tilde{K}}|} (1-V_{i_{\tilde{K}}})^{N-|\tilde{\blockfa}_{\tilde{K}}|} \prod_{\stackrel{l \in \mathbb{N}}{l \notin \{i_{1}, \ldots, i_{\tilde{K}}\}}} (1-V_{l})^{N}
						\right)
			\right].
\end{align*}
The final expression depends only on the number of data points $N$ and feature sizes
and is symmetric in the feature sizes. So it has EFPF form.

In the other direction, we sidestep the issue of feature ordering by
looking at the number of features to which each data index belongs.
The advantage of this approach is that this
number does not depend on the feature order. The following result
is the key to making use of this observation.

\begin{lemma} \label{lem:seq_bin}
Let $K_{n}$ be a sequence of positive integers. For each $n$, suppose we have (constants)
$$
	1 \ge p_{n,1} \ge p_{n,2} \ge \ldots \ge p_{n,K_{n}} > 0.
$$
And, for completeness, suppose $p_{n,k} = 0$ for $k > K_{n}$.
Let $X_{n,k} \sim \bern(p_{n,k})$, independently across $n$ and $k$ and with $k = 1:K_{n}$.
Define $\#_{n} := \sum_{k=1}^{K_{n}} X_{n,k}$. 
Then the following are equivalent.
\begin{enumerate}
	\item $\#_{n} \stackrel{d}{\rightarrow} \#$ for some finite-valued random variable $\#$ on $\{0,1,2,\ldots\}$.
	\item There exist (constants) $\{p_{k}\}_{k=1}^{\infty}$ and $\lambda$ such that $p_{k} \in [0,1]$ and $\lambda > 0$ and further such that, $\forall k = 1,2,\ldots$,
	\begin{equation}
		\label{eq:pnk_lim}
		p_{n,k} \rightarrow p_{k}, \quad n \rightarrow \infty
	\end{equation}
	and
	\begin{equation}
		\label{eq:pnk_sum_lim}
		\sum_{k = 1}^{K_{n}} p_{n,k} \rightarrow \sum_{k=1}^{\infty} p_{k} + \lambda, \quad n \rightarrow \infty.
	\end{equation}
\end{enumerate}
In this case, we further have
\begin{equation}
	\label{eq:p_mono}
	1 \ge p_{1} \ge p_{2} \ge \cdots,
\end{equation}
and
\begin{equation}
	\label{eq:num_distr}
	\# \stackrel{d}{=} Y + \sum_{k=1}^{\infty} X_{k},
\end{equation}
where $X_{k} \sim \bern(p_{k})$, independently across $k$, and $Y \sim \pois(\lambda)$.
\end{lemma}

The proof of \lem{seq_bin} appears in \app{seq_bin}; this lemma is essentially a special case
of a more general result in \app{lemmas_seq_bin}.

In this direction of the proof of \thm{efpf_freq}, we want to show that if we assume that the probability of a feature allocation takes EFPF form, then the allocation has the same distribution as if it were generated according to a feature frequency model with a Poisson-distributed number of singleton features for each $n$. To see how \lem{seq_bin} may be useful, we let $\hat{\#}$ be the number of features in which index 1 occurs. Recall that in order to use the EFPF, we apply a uniform random ordering to the features of our feature allocation. Examining $\hat{\#}$ is advantageous since it is invariant to the ordering of the features, and we can thereby avoid complicated considerations that may arise related to the feature ordering and consistency of ordering across feature allocations of increasing index sets.

Indeed, recall that once we have chosen a uniform random ordering for the features, the EFPF assumption tells us that any feature allocation with the requisite feature sizes and number of indices has the same probability. Let $K_{N}$ be the number of features containing indices $[N]$. If $M_{N,k}$ is the size of the $k$th feature (under the uniform random ordering) after $N$ indices, then there are 
$$
	\binom{N}{M_{N,1}} \cdots \binom{N}{M_{N,K_{N}}}
$$
such configurations. $M_{N,1} / N$ have index 1 in the first feature. For each such allocation, there are equally many configurations of the remaining features. So, for each such allocation, $M_{N,2}/N$ have index 1 in the second feature. And so on. That is, we have that, conditionally on the feature sizes, the number of features with index 1 has the same distribution as a sum of Bernoulli random variables:
\begin{equation}
	\label{eq:unif_ord_sum_bern}
	\sum_{k=1}^{K_{N}} \tilde{X}_{N,k}, \quad \tilde{X}_{N,k} \indep \bern(M_{N,k} / N).
\end{equation}
First, we note that the feature sizes are sufficient for the distribution by the EFPF assumption. So we may, in fact, condition on $\xi_{N}$, which we define to be the sigma-field of events invariant under permutations of the indices $n=1,\ldots,N$. That is, $\hat{\#} | \xi_{N}$ has the same distribution as the sum in \eq{unif_ord_sum_bern}.

Second, we note that the sum in \eq{unif_ord_sum_bern} has no dependence on the ordering of the features. In particular, then, let $1 \ge p_{N,1} \ge p_{N,2} \ge \cdots \ge p_{N,K_{N}}$
be the sizes of the features divided by $N$ and ordered so as to be monotonically decreasing. Again, note that we are only considering those features including some data index in $[N]$. It follows that
\begin{equation}
	\label{eq:efpf_d}
	\hat{\#} | \xi_{N} \stackrel{d}{=} \sum_{k=1}^{K_{N}} \tilde{X}_{N,k}, \quad \tilde{X}_{N,k} \indep \bern(p_{N,k}).
\end{equation}
So we see that we have circumvented ordering concerns and can simply use a size ordering in what follows.

At this point, it seems natural to apply \lem{seq_bin} to $\hat{\#} | \xi_{N}$. To do so, 
we need to show that $\hat{\#} | \xi_{N}$ converges in distribution to some random variable with
non-negative integer values as $N \rightarrow \infty$. To that end, we note that $(\xi_{N})$
is a reversed filtration: $\xi_{N} \supseteq \xi_{N+1}$ for all $N$. And further
$\mbp(\hat{\#} = j | \xi_{N})$ is a reversed martingale since (1) $\mbp(\hat{\#} = j | \xi_{N})$ is measurable
with respect to $\xi_{N}$; (2) $\mbp(\hat{\#} = j | \xi_{N})$ is integrable; and (3) by the tower law, $\mbp(\hat{\#} = j | \xi_{N}) | \xi_{N+1} = \mbp(\hat{\#} = j | \xi_{N+1})$.
It follows that
$$
	\mbp(\hat{\#} = j | \xi_{N}) \convas \mbp(\hat{\#} = j | \xi_{\infty})
$$
and hence
$$
	\hat{\#} | \xi_{N} \convd \hat{\#} | \xi_{\infty} \quad \mathrm{a.s.}\
$$
for $\xi_{\infty} = \bigcap_{n=1}^{\infty} \xi_{n}$ by reverse martingale convergence.

So we may apply \lem{seq_bin} conditional on $\xi_{\infty}$.
By the lemma, we have that, conditional on $\xi_{\infty}$, 
\begin{align*}
	\hat{\#}
		&\stackrel{d}{=} Y + \sum_{k=1}^{\infty} X_{k} \\
	Y &\sim \pois(\lambda) \\
	X_{k} &\indep \bern(p_{k})
\end{align*}
for some $\lambda \ge 0$ and some $1 \ge p_{1} \ge p_{2} \ge \cdots$.
The conditioning on $\xi_{\infty}$ means that, in general,
$\lambda$ and the frequencies
$1 \ge p_{1} \ge p_{2} \ge \cdots$ may be positive random variables,
as was to be shown.
\end{proof}

\section{Conclusion} \label{sec:conclusion}

It has been known for some time that the class of exchangeable partitions
is the same as the class of partitions generated by the Kingman paintbox,
which is in turn the same as the class of partitions with exchangeable partition
probability functions (EPPFs). In this paper, we have developed an analogous
set of concepts for the feature allocation problem.  We defined a feature allocation
as an extension of partitions in which indices may belong to multiple groups, now
called features.  We have developed analogues of the EPPF and the Kingman paintbox,
which we refer to as the exchangeable feature partition function (EFPF) and the
feature paintbox, respectively.  The feature paintbox allows us to construct
a feature allocation via iid draws from an underlying collection of sets in
the unit interval.  In the special cases of partitions and feature frequency models the construction
of these sets is particularly straightforward.

The Venn diagram presented earlier in \fig{venn_summary} summarizes our
results and also suggests a number of open areas for further investigation.
In particular it would be useful to develop a fuller understanding of the 
regularity condition on feature allocations that allows the connection to
the feature paintbox.  It would also be of interest to carry the program
further by exploring generalizations of the partition and feature allocation 
framework to other combinatorial representations, such as the setting in
which we allow multiplicity within, as well as across, features
\citep{broderick:2011:combinatorial,zhou:2012:beta}.

\section{Acknowledgments}
T.~Broderick was funded by a National Science Foundation 
Graduate Research Fellowship. 
The work of J.~Pitman was supported in part by National Science Foundation Award 0806118.
Our work has also been supported
by the Office of Naval Research under contract/grant number 
N00014-11-1-0688.

\bibliography{bnp}

\appendix

\section{Intermediate lemmas leading to \lem{seq_bin}} \label{app:lemmas_seq_bin}

To prove \lem{seq_bin}, we will make use of a few definitions and lemmas.
We start with two definitions. First, suppose we have constants 
$p_{1}, p_{2}, p_{3}, \ldots$ such that
$$
	1 \ge p_{1} \ge p_{2} \ge p_{3} \ge \ldots \ge 0
$$
and a constant $\lambda$ such that $0 \le \lambda < \infty$.
Then we say that the random variable \# has the \emph{extended Poisson-binomial
distribution} with parameters $(\lambda, p_{1}, p_{2}, \ldots)$ if there
exist independent random variables $X_{0}, X_{1}, X_{2}, \ldots$ with
\begin{align*}
	X_{0} &\sim \pois(\lambda) \\
	X_{k} &\sim \bern(p_{k}), \quad k = 1,2,\ldots
\end{align*}
such that
$$
	\# = X_{0} + \sum_{k=1}^{\infty} X_{k}.
$$

Second, we say that $\mu$ is the \emph{spike size-location measure} with
parameters $(\lambda, p_{1}, p_{2}, \ldots)$ if $\mu$ puts mass $\lambda$ at 0 and mass $p_{k}$ at $p_{k}$ for $k = 1,2,\ldots$.
With these definitions in hand, we can state the following lemmas.

\begin{lemma} \label{lem:poiss_bin}
Let \# have the extended Poisson-binomial distribution with parameters $(\lambda, p_{1}, p_{2}, \ldots)$.

Then 
\begin{enumerate}
	\item \# is a.s.\ finite if and only if $\sum_{k=1}^{\infty} p_{k} < \infty$
	\item If \# is a.s.\ finite, then the parameters $(\lambda, p_{1}, p_{2}, \ldots)$ are uniquely determined by the distribution of \#.
\end{enumerate}
\end{lemma}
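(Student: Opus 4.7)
The plan is to establish part 1 by a direct Borel--Cantelli argument and part 2 by analyzing the probability generating function (PGF) of $\#$, from which I extract the power sums $S_{j} := \sum_{k} p_{k}^{\,j}$ and then recover the multiset of $p_{k}$'s.

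\emph{Part 1.} Since $X_{0} \sim \pois(\lambda)$ is a.s.\ finite, $\#$ is a.s.\ finite iff $\sum_{k \ge 1} X_{k} < \infty$ a.s., i.e.\ iff $\{X_{k}=1\}$ occurs for only finitely many $k$ a.s. Because the events $\{X_{k}=1\}$ are independent with $\mbp(X_{k}=1) = p_{k}$, the Borel--Cantelli lemmas give the equivalence with $\sum_{k} p_{k} < \infty$: the first lemma yields ``$\Leftarrow$,'' and the second (which uses independence) yields ``$\Rightarrow$.''

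\emph{Part 2.} Assume now $\sum_{k} p_{k} < \infty$, so $\#$ is a.s.\ finite. The law of a non-negative integer-valued random variable is determined by its PGF, and by independence
\[
g(z) := \mbe[z^{\#}] = e^{\lambda(z-1)} \prod_{k=1}^{\infty} \bigl(1 - p_{k}(1-z)\bigr).
\]
The product converges absolutely and uniformly on some neighborhood $U$ of $z=1$ (because $\sum p_{k} < \infty$) and $g(1)=1$, so taking the principal branch of $\log g$ on a smaller neighborhood and expanding in powers of $1-z$ gives
\[
\log g(z) = -(\lambda + S_{1})(1-z) - \sum_{j=2}^{\infty} \frac{S_{j}}{j}\,(1-z)^{j},
\]
where $S_{j} \le S_{1} < \infty$ because $p_{k} \in [0,1]$. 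Matching Taylor coefficients, the law of $\#$ therefore determines $S_{j}$ for every $j \ge 2$ together with the single combination $\lambda + S_{1}$.

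It remains to recover the multiset $\{p_{k} : p_{k} > 0\}$ (with multiplicities) from $\{S_{j}\}_{j \ge 2}$ alone. The plan is to form
\[
F(t) := \exp\!\Bigl(-\sum_{j \ge 2} \tfrac{S_{j}}{j}\,t^{j}\Bigr) = e^{S_{1}t}\,\prod_{k=1}^{\infty}(1 - p_{k}t),
\]
where the second equality follows by adding back the missing $j=1$ term in the Taylor expansion of $\sum_{k} \log(1-p_{k}t)$. The left-hand side is computed from the already-determined coefficients $\{S_{j}\}_{j \ge 2}$, while the right-hand side — by a Weierstrass-type argument using $\sum p_{k} < \infty$ — extends to an entire function in $t$ whose zero set, counted with multiplicity, is exactly $\{1/p_{k} : p_{k} > 0\}$. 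Reading off the zeros of $F$ recovers the multiset of positive $p_{k}$'s; then $S_{1}$ is computable, and $\lambda = (\lambda + S_{1}) - S_{1}$ is determined. The main obstacle is precisely this last recovery step: although the $S_{j}$ for $j\ge 2$ might not at first glance look like enough data, the constraints $p_{k}\in[0,1]$ and $\sum p_{k}<\infty$ force the zeros $1/p_{k}$ of $F$ into $\{|t|\ge 1\}$ where they accumulate only at infinity, so the Hadamard-style factorization applies and the multiset is uniquely read off.
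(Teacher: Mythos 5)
Your proof is correct, and it matches the paper's argument for Part~1 (Borel--Cantelli in both directions) and for the first half of Part~2: both you and the paper expand $-\log \mbe\, s^{\#}$ in powers of $(1-s)$ and observe that the law of $\#$ determines exactly the power sums $S_{j} = \sum_{k} p_{k}^{j}$ for $j \ge 2$ together with the single combination $\lambda + S_{1}$ (in the paper's notation these are the moments $m_{j-1}$, $j \ge 1$, of the spike size-location measure $\mu$, with $m_{0} = \lambda + S_{1}$). Where you genuinely diverge is the final uniqueness step. The paper packages the parameters into the finite measure $\mu$ on $[0,1]$ and invokes determinacy of the Hausdorff moment problem: a bounded measure on a compact interval is uniquely determined by its moments. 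You instead recover the multiset of positive $p_{k}$'s directly as the reciprocals of the zeros, counted with multiplicity, of the entire function $e^{S_{1}t}\prod_{k}(1-p_{k}t)$, which you correctly identify as the unique analytic continuation of $\exp\bigl(-\sum_{j\ge 2}S_{j}t^{j}/j\bigr)$, a function computable from the law of $\#$ alone; the multiset then yields $S_{1}$ and hence $\lambda$. Both routes are sound (your continuation step works because $\sum_{k}p_{k}<\infty$ makes the product entire with zeros $1/p_{k}\ge 1$ accumulating only at infinity, and a germ at $0$ has at most one entire extension). The trade-off: the paper's approach is shorter here and reuses the spike size-location measure, which it needs anyway for the continuity result (\lem{conts_bin}), where weak convergence of the $\mu_{n}$ is the natural topology; your approach is more explicit about how the individual $p_{k}$ are read off and avoids citing moment determinacy, at the cost of a complex-analytic continuation/Weierstrass-product argument.
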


In particular, since the parameters $(\lambda, p_{1}, p_{2}, \ldots)$ uniquely determine the distribution of \#, \lem{poiss_bin} tells us that there is a bijection between the distribution of \# and the parameters $(\lambda, p_{1}, p_{2}, \ldots)$ when $\#$ is a.s.\ finite. See \app{poiss_bin} for the proof of \lem{poiss_bin}.

The next lemma tells us that this correspondence between distributions and parameters is also continuous in a sense.
\begin{lemma} \label{lem:conts_bin}
	For $n=1,2,\ldots$, let $\#_{n}$ have the extended Poisson-binomial distribution with 
parameters $(\lambda_{n}, p_{n,1}, p_{n,2},\ldots)$. Let $\mu_{n}$ be the spike size-location measure
with parameters $(\lambda_{n}, p_{n,1}, p_{n,2},\ldots)$.

Then the following two statements are equivalent:
\begin{enumerate}
	\item $\#_{n}$ converges in distribution to a finite-valued limit random variable.
	\item $\mu_{n}$ converges weakly to some finite measure on $[0,1]$.
\end{enumerate}
If the convergence holds, the limiting random variable (call it \#) has an extended Poisson-binomial distribution, and the limiting measure (call it $\mu$) is a spike size-location measure. In this case, $\#$ and $\mu$ have the same parameters; call the parameters $(\lambda, p_{1}, p_{2}, \ldots)$.
\end{lemma}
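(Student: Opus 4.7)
The plan is to reduce both conditions to the convergence of a single analytic object: the log-probability generating function. For $u \in [0,1)$, define $\psi_n(u) := -\log \mbe[(1-u)^{\#_n}]$, which by independence of the components in the extended Poisson--binomial definition equals $\lambda_n u - \sum_k \log(1-p_{n,k} u)$. Introduce the kernel $g(x,u) := -\log(1-xu)/x$ for $x \in (0,1]$, extended by $g(0,u) := u$; for each fixed $u \in [0,1)$, $g(\cdot,u)$ is continuous and bounded on the compact set $[0,1]$, and a direct computation gives the transform identity
$$
\psi_n(u) \;=\; \int_{[0,1]} g(x,u)\, \mu_n(dx),
$$
which is the workhorse of both implications.

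For the direction (2) $\Rightarrow$ (1), given $\mu_n$ converging weakly to a finite measure $\mu$ on $[0,1]$, the boundedness and continuity of $g(\cdot,u)$ yield $\psi_n(u) \to \int g(x,u)\, \mu(dx)$ for every $u \in [0,1)$, equivalently $\mbe[z^{\#_n}] \to \exp\bigl(-\int g(x,1-z)\, \mu(dx)\bigr)$ for each $z \in (0,1]$. The limit takes value $1$ at $z = 1$, so by the continuity theorem for probability generating functions $\#_n$ converges in distribution to some a.s.\ finite $\mathbb{Z}_{\ge 0}$-valued $\#$.

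For (1) $\Rightarrow$ (2), I would first establish tightness using the elementary bound $-\log(1-xu) \ge xu$, which gives $g(x,u) \ge u$ and hence $u \cdot \mu_n([0,1]) \le \psi_n(u)$. Since $\#$ is a.s.\ finite, $\psi(u) := -\log \mbe[(1-u)^{\#}]$ is finite on $[0,1)$, and evaluating at $u = 1/2$ bounds $\mu_n([0,1])$ uniformly; combined with compactness of $[0,1]$, this yields relative weak compactness of $(\mu_n)$. For any subsequential weak limit $\tilde\mu$, the forward direction already proved gives $\psi(u) = \int g(x,u)\, d\tilde\mu$ for every $u \in [0,1)$, and expanding $g(x,u) = \sum_{j \ge 1} x^{j-1} u^j / j$ shows that $\psi$ determines every moment of $\tilde\mu$. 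On the compact set $[0,1]$ a finite measure is determined by its moments, so $\tilde\mu$ is unique; hence $\mu_n$ converges weakly to a single limit $\mu$.

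It remains to identify $\mu$ as a spike size-location measure and $\#$ as an extended Poisson--binomial variable with the same parameters. Pass to a subsequence along which the ordered atom locations $p_{n,k}$ converge to limits $p_k \ge 0$ with $p_1 \ge p_2 \ge \cdots$; the diagonal extraction is legitimate because $\#\{k : p_{n,k} > \epsilon\} \le \mu_n([0,1])/\epsilon$ is bounded uniformly in $n$ for each $\epsilon > 0$. The tail estimate $\sum_{k > K} p_{n,k}^{j+1} \le p_{n,K+1}^{\,j} \sum_k p_{n,k} \le C\, p_{n,K+1}^{\,j}$, valid for $j \ge 1$ since $\sum_k p_{n,k} \le \mu_n([0,1])$ is uniformly bounded, combined with Fatou's lemma, gives $\int x^j\, d\mu = \sum_k p_k^{\,j+1}$ for every $j \ge 1$. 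Thus the restriction $\mu|_{(0,1]}$ and the candidate spike measure $\sum_k p_k\, \delta_{p_k}$ on $(0,1]$ agree on every polynomial vanishing at $0$; by Weierstrass approximation, and then by approximating indicators of closed subsets of $(0,1]$ by continuous functions vanishing at $0$, they agree as measures on $(0,1]$. Setting $\tilde\lambda := \mu(\{0\})$ then yields $\mu = \tilde\lambda\, \delta_0 + \sum_k p_k\, \delta_{p_k}$, and substituting back into $\psi(u) = \int g(x,u)\, d\mu$ recovers $\psi(u) = \tilde\lambda u - \sum_k \log(1 - p_k u)$, showing that $\#$ is extended Poisson--binomial with parameters $(\tilde\lambda, p_1, p_2, \ldots)$ matching those of $\mu$. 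The main technical obstacle lies precisely in this last identification: the very rigid constraint that $\mu_n$ places mass $p_{n,k}$ at location $p_{n,k}$ must not be allowed to smear into a continuous part in the weak limit, and the argument ruling this out relies both on the uniform summability of the $p_{n,k}$ and on the $j \ge 1$ tail bound above.
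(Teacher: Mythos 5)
Your proof is correct and rests on the same engine as the paper's: the transform identity $-\log \mbe\, s^{\#_n} = \int_{[0,1]} f_s(x)\,\mu_n(dx)$ with a bounded continuous kernel, which immediately gives (2) $\Rightarrow$ (1), and a uniform bound on the total mass of $\mu_n$ plus uniqueness of subsequential weak limits (via moments of the limit measure) for the converse. You deviate in two places, both to good effect. First, for the uniform mass bound you use the pointwise inequality $f_s(x) \ge 1-s$, so that $(1-s)\,\mu_n([0,1]) \le -\log\mbe\, s^{\#_n}$, which is bounded because the generating functions converge to a strictly positive limit; the paper instead runs a Chebyshev argument off $\mbe\,\#_n = \mu_n([0,1])$ and $\var \#_n \le \mbe\,\#_n$ together with tightness of $(\#_n)$. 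Your route is shorter and avoids any appeal to second moments. Second, in identifying the limit $\mu$ as a spike size-location measure, the paper argues rather tersely that the number of atoms of $\mu_n$ exceeding $\epsilon$ is uniformly bounded and concludes that $\mu$ is discrete; as you note, weak limits of discrete measures need not be discrete in general, so this step deserves the extra care you give it---extracting limits $p_{n,k} \to p_k$ of the ordered atoms, matching $\int x^j\,d\mu = \sum_k p_k^{j+1}$ for $j \ge 1$ via the tail bound $\sum_{k>K} p_{n,k}^{j+1} \le C\, p_{n,K+1}^{j}$, and then separating off the mass at $0$. Your version makes explicit exactly where the rigid mass-equals-location structure of the spike measures is used to prevent smearing into a continuous part, which is the one point the paper's own write-up leaves implicit.
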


This lemma is suggested by, and provides an extension to, previous results on triangular arrays of random variables with row sums converging in distribution; cf., \citet{kallenberg:2002:foundations}.
See \app{conts_bin} for the proof of \lem{conts_bin}.

\lem{seq_bin} highlights a special case of \lems{poiss_bin} and \lemss{conts_bin} that we use to prove the equivalence in \thm{efpf_freq}.

\section{Proof of \lem{seq_bin}} \label{app:seq_bin}

We can rephrase the statement of \lem{seq_bin} in terms of the terminology introduced in \app{lemmas_seq_bin}. In particular, we are given a sequence of random variables $\#_{n}$, where
$\#_{n}$ has an extended Poisson-binomial distribution with parameters $(0, p_{n,1}, p_{n,2}, \ldots, p_{n,K_{n}}, 0, 0, \ldots)$. Then we see that \lem{seq_bin} is essentially a special case of \lem{conts_bin} where $\lambda_{n}$ and all but finitely many of the $p_{n,k}$ are equal to zero. Indeed, the extended Poisson-binomial distribution in exactly this special case is known as the \emph{Poisson-binomial distribution} \citep{wang:1993:number,chen:1997:statistical}.

\paragraph{(1) $\Rightarrow$ (2).} 
We assume that $\#_{n}$ converges in distribution to some finite-valued random variable \#,
and we wish to show that the $p_{n,k}$ converge to some limiting $p_{k}$ as $n \rightarrow \infty$ for each $k$, and likewise that $\sum_{k=1}^{K_{n}} p_{n,k}$ converges to $\sum_{k=1}^{\infty} p_{k} + \lambda$ for some non-negative constant $\lambda$. The $p_{n,k}$ are just the ordered atom sizes of the spike size-location measures $\mu_{n}$ in \lem{conts_bin}. By \lem{conts_bin}, the $\mu_{n}$ converge weakly to some spike size-location measure $\mu$. This convergence yields both the desired convergence of the atom sizes (\eq{pnk_lim}, repeated here)
$$
	p_{n,k} \rightarrow p_{k}, \quad n \rightarrow \infty
$$
and the desired convergence of the total mass of $\mu_{n}$ (\eq{pnk_sum_lim}, repeated here)
$$
	\sum_{k = 1}^{K_{n}} p_{n,k} \rightarrow \sum_{k=1}^{\infty} p_{k} + \lambda, \quad n \rightarrow \infty.
$$

\paragraph{(2) $\Rightarrow$ (1).} Now we assume that the $p_{n,k}$ converge to some limiting $p_{k}$ as $n \rightarrow \infty$ for each $k$, and likewise that $\sum_{k=1}^{K_{n}} p_{n,k}$ converges to $\sum_{k=1}^{\infty} p_{k} + \lambda$ for some appropriate positive constants $\{p_{k}\}, \lambda$. We wish to show that $\#_{n}$ converges in distribution to some finite-valued random variable \#.

The assumed convergences guarantee the weak convergence of the spike size-location measures $\mu_{n}$ to some finite measure on $[0,1]$. \lem{conts_bin} then guarantees that $\#_{n}$ converges in distribution to some finite-valued random variable \#.

\paragraph{Assume (1) and (2).} 
We wish to show that $1 \ge p_{1} \ge p_{2} \ge \ldots$ (\eq{p_mono}), but this result follows
from the monotonicity of the $p_{n,k}$.

\eq{num_distr} in the original lemma statement can be rephrased as wanting to show that \# has the extended Poisson-binomial distribution with parameters $(\lambda, p_{1}, p_{2}, \ldots)$. This follows directly from the final statement in \lem{conts_bin} and our identification of the limiting spike size-location measure $\mu$ as having parameters $(\lambda, p_{1}, p_{2}, \ldots)$ in a previous part of this proof (``(1) $\Rightarrow$ (2)'').
\qed

\section{Proof of \lem{poiss_bin}} \label{app:poiss_bin}

Throughout we assume that \# has the extended Poisson-binomial distribution with parameters $(\lambda, p_{1}, p_{2}, \ldots)$.

\paragraph{(1).} We want to show that \# is a.s.\ finite if and only if $\sum_{k=1}^{\infty} p_{k} < \infty$. 
Since \# is extended Poisson-binomially distributed, we can write $\# = X_{0} + \sum_{k=1}^{\infty} X_{k}$ for independent $X_{0} \sim \pois(\lambda)$ and $X_{k} \sim \bern(p_{k})$ for $k=1,2,\ldots$.
First suppose $\sum_{k=1}^{\infty} p_{k} < \infty$. Then $\sum_{k=1}^{\infty} X_{k}$ is a.s.\ finite by the Borel-Cantelli lemma. Second, suppose $\sum_{k=1}^{\infty} p_{k} = \infty$. Then $\sum_{k=1}^{\infty} X_{k}$ is a.s.\ infinite by the second Borel-Cantelli lemma.
Since $X_{0}$ is a.s.\ finite by construction, the result follows.

\paragraph{(2).} We want to show that if \# is a.s.\ finite, then the parameters $(\lambda, p_{1}, p_{2}, \ldots)$ are uniquely determined by the distribution of \#. To that end, let $\mu$ be the spike size-location measure with
parameters $(\lambda, p_{1}, p_{2}, \ldots)$ . Note that $\mu$ need not be a probability measure but is finite by the assumption that \# is a.s.\ finite together with part (1) of the lemma.

To better understand the distribution of \#, we write the probability generating function of \#. For $s$ with $0 \le s \le 1$, we have
\begin{align*}
	\mbe s^{\#} &= e^{-\lambda (1-s)} \prod_{k=1}^{\infty} \left[ 1 - (1-s) p_{k} \right],
\end{align*}
which implies that for $s$ with $0 < s \le 1$ we have
\begin{align}
	- \log \mbe s^{\#}
		\label{eq:neg_log_pgf}
		&= \lambda (1-s) - \sum_{k=1}^{\infty} \log \left[ 1 - (1-s) p_{k} \right] \\
		\nonumber
		&= \lambda (1-s) + \sum_{k=1}^{\infty} \sum_{j=1}^{\infty} \frac{1}{j} (1-s)^{j} p_{k}^{j} \\
		\nonumber
		& \textrm{from the Taylor series expansion of the logarithm} \\
		\nonumber
		&= \lambda (1-s) + \sum_{j=1}^{\infty} \frac{1}{j} (1-s)^{j} \sum_{k=1}^{\infty} p_{k}^{j} \\
		\nonumber
		& \textrm{interchanging the order of summation since the summands are non-negative} \\
		\label{eq:sum_int_from_pgf}
		&= (1-s) \mu\{0\} + \sum_{j=1}^{\infty} \frac{1}{j} (1-s)^{j} \int_{(0,1]} x^{j-1} \mu(dx) \\
		\label{eq:moments_from_pgf}
		&= \sum_{j=1}^{\infty} \frac{1}{j} (1-s)^{j} m_{j-1},
\end{align}
where
$$
	m_{j} := \int_{[0,1]} x^{j} \mu(dx)
$$
is the $j$th moment of the measure $\mu$.

Now the distribution of \# uniquely determines the probability generating function of \#, which by \eq{moments_from_pgf} uniquely determines the sequence of moments of the measure $\mu$. In turn,
$\mu$ is a bounded measure on $[0,1]$ and hence uniquely determined by its moments. And the parameters $(\lambda, p_{1}, p_{2}, \ldots)$ are uniquely determined by $\mu$.
\qed

\section{Proof of \lem{conts_bin}} \label{app:conts_bin}

For $n=1,2,\ldots$, we assume $\#_{n}$ has the extended Poisson-binomial distribution with 
parameters $(\lambda_{n}, p_{n,1}, p_{n,2},\ldots)$. We further assume $\mu_{n}$ has the spike size-location measure
with parameters $(\lambda_{n}, p_{n,1}, p_{n,2},\ldots)$.

\paragraph{(2) $\Rightarrow$ (1).} Suppose the $\mu_{n}$ converge weakly to some finite measure $\mu$ on $[0,1]$. We want to show that $\#_{n}$ converges in distribution to a finite-valued limit random variable.

In \app{poiss_bin}, we noted that we can express the probability generating function of an extended Poisson-binomial distribution in terms of a spike size-location measure with the same parameters. In particular, by \eq{sum_int_from_pgf}, we can write the negative log of the probability generating function of $\#_{n}$ as
$$
	-\log \mbe s^{\#_{n}}
		= \int_{[0,1]} f_{s}(x) \; \mu_{n}(dx),
$$
where
\begin{equation}
	\label{eq:f_s_x}
	f_{s}(x)
		:= \sum_{j=1}^{\infty} \frac{1}{j} (1-s)^{j} x^{j-1}
		= \left\{ \begin{array}{ll}
				- x^{-1} \log\left[ 1 - (1-s) x \right]		& x > 0 \\
				1-s								& x = 0
			\end{array} \right. .
\end{equation}
Since $f_{s}(x)$ is bounded in $x$ for each fixed $s$ with $0 < s \le 1$, we have by the 
assumption of weak convergence of $\mu_{n}$ that
\begin{align*}
	\lim_{n \rightarrow \infty} -\log \mbe s^{\#_{n}}
		&= \int_{[0,1]} f_{s}(x) \; \mu(dx),
\end{align*}
Moreover, since $\mu$ is finite by assumption,
we have that the result is finite for each $s$ with $0 < s \le 1$.
It follows that $\#_{n}$ converges in distribution to a finite random variable $\#$, with
probability generating function given by
\begin{equation}
	\label{eq:pgf_num_mu}
	\mbe s^{\#} = \exp\left\{ - \int_{[0,1]} f_{s}(x) \; \mu(dx) \right\}.
\end{equation}

\paragraph{Assume (1).} Now suppose the $\#_{n}$ converge in distribution
to a finite random variable $\#$. The next two parts of the proof will rely on an intermediate step: showing that $\mu_{n}$ has bounded total mass in this case.

To show that $\mu_{n}$ has bounded total mass,
first note that $\mbe \#_{n}$ is
exactly the total mass of $\mu_{n}$:
\begin{align*}
	\mbe \#_{n} &= \lambda_{n} + \sum_{k=1}^{\infty} p_{n,k} =: \Sigma_{n}, \\
	\textrm{and } \var \#_{n} &= \lambda_{n} + \sum_{k=1}^{\infty} p_{n,k} (1-p_{n,k}).
\end{align*}
Noting that $\var \#_{n} \le \Sigma_{n}$ allows us to apply Chebyshev's inequality to find
\begin{align*}
	1/4 &\ge \mbp( |\#_{n} - \mbe \#_{n}| \ge 2 \sqrt{\var \#_{n}} ) \\
	3/4 &\le \mbp( |\#_{n} - \Sigma_{n}| \le 2 \sqrt{\var \#_{n}} ) \\
		&\le \mbp( |\#_{n} - \Sigma_{n}| \le 2 \sqrt{\Sigma_{n}} ) \\
		&\le \mbp( \#_{n} \ge \Sigma_{n} - 2 \sqrt{\Sigma_{n}} ).
\end{align*}

Since $\#_{n}$ converges in distribution by assumption, the sequence $\#_{n}$ is tight. Choose $\epsilon$ 
such that $1/2 > \epsilon > 0$. Then there exists some $N_{\epsilon}$ such that, for all $n \ge 1$, we have
$\mbp(\#_{n} \le N_{\epsilon}) > 1 - \epsilon > 1/2$.
It follows that, for all $n \ge 1$,
$$
	1/4 \le \mbp(N_{\epsilon} \ge \Sigma_{n} - 2 \Sigma_{n}).
$$
Since $\Sigma_{n}$ is non-random, it must be that $\mbp(N_{\epsilon} \ge \Sigma_{n} - 2\sqrt{\Sigma_{n}}) = 1$. That is, the total mass of $\mu_{n}$ is bounded.

\paragraph{Assume (1) and (2).} Suppose $\#_{n}$ converges in distribution to some finite-valued limit random variable \# and that $\mu_{n}$ converges weakly to some finite measure $\mu$.
We want to show that $\#$ has an extended Poisson-binomial distribution, that $\mu$ is a spike size-location measure, and that $\#$ and $\mu$ have the same parameters.

We start by showing that $\mu$ is discrete.
Choose any $\epsilon > 0$. Since the mass of $\mu_{n}$ is bounded across $n$ by the previous part of the proof (``Assume (1)''), the number of 
atoms of $\mu_{n}$ greater than $\epsilon$ is bounded across $n$. It follows that the number of
atoms of $\mu$ has the same bound. So $\mu$ is discrete. Since $\mu_{n}$ converges weakly to
$\mu$, we see that $\mu$ must have atoms with sizes and locations $p_{1}, p_{2}, \ldots$ such that
$$
	1 \ge p_{1} \ge p_{2} \ge \ldots
$$
as well as a potential atom, with size we denote by $\lambda$, at zero. 
That is, $\mu$ is a spike size-location measure with parameters $(\lambda, p_{1}, p_{2}, \ldots)$.

In a previous part of the proof (``(2) $\Rightarrow$ (1)''), we 
expressed the probability generating function
of $\#$ as a function of $\mu$ (\eq{pgf_num_mu}).
With this relation in hand, we can reverse the series of equations
presented in \app{poiss_bin} and
ending in \eq{sum_int_from_pgf} to find the form of the probability
generating function for \#
(\eq{neg_log_pgf}). In particular, \eq{neg_log_pgf} tells
us that \# is an extended Poisson-binomial random variable
with parameters
$(\lambda, p_{1}, p_{2}, \ldots)$. In particular,
we emphasize that \# has the same parameters as $\mu$, which 
we have already shown above is a spike size-location measure.

\paragraph{(1) $\Rightarrow$ (2)} Now step back and assume that $\#_{n}$ converges in distribution to a finite-valued limit random variable; call it \#. We wish to show that $\mu_{n}$ converges weakly to some finite measure on $[0,1]$.

By a previous part of this proof (``Assume (1)''),
the mass of $\mu_{n}$ is bounded across $n$. Moreover, by construction,
all of the mass for each $\mu_{n}$ is concentrated on $[0,1]$. So it must be that
the sequence $\mu_{n}$ is tight. It follows that if every weakly convergent subsequence $\mu_{n_{j}}$
has the same limit $\mu$, then $\mu_{n}$ converges weakly to $\mu$. 

Consider a subsequence $(n_{j})_{j}$ of $\mathbb{N}$. We know
$\#_{n_{j}}$ converges in distribution to $\#$ by the assumption that $\#_{n}$ converges in distribution to \#.  The previous part of this proof (``Assume (1) and (2)'')
gives that the form of the limit of $\mu_{n_{j}}$ is determined by $\#$; namely, the limit
is a spike size-location measure with parameters shared by \#. In particular, then, the limit $\mu$
must be the same for every subsequence, and the desired result is shown.
\qed

\end{document}